\theoremstyle{plain}
\newtheorem{theorem}{Theorem}[section]
\newtheorem{proposition}[theorem]{Proposition}
\theoremstyle{definition}
\theoremstyle{remark}
\newtheorem{remark}{Remark}
\def\calh{{\cal H}}
\def\call{{\cal L}}
\def\caly{{\cal Y}}
\def\call{{\cal L}}
\def\hal{{1 \over 2}}
\def\col{\mbox{col}}
\def\L2{{\cal L}_2}
\def\L2e{{\cal L}_{2e}}
\def\rea{\mathbb{R}}
\def\begequarr{\begin{eqnarray}}
\def\endequarr{\end{eqnarray}}
\def\begequarrs{\begin{eqnarray*}}
\def\endequarrs{\end{eqnarray*}}
\def\begarr{\begin{array}}
\def\endarr{\end{array}}
\def\begequ{\begin{equation}}
\def\endequ{\end{equation}}
\def\lab{\label}
\def\begdes{\begin{description}}
\def\enddes{\end{description}}
\def\begenu{\begin{enumerate}}
\def\begite{\begin{itemize}}
\def\endite{\end{itemize}}
\def\endenu{\end{enumerate}}
\def\lef[{\left[\begin{array}}
\def\rig]{\end{array}\right]}
\def\begcen{\begin{center}}
\def\endcen{\end{center}}
\def\TAC{{\it IEEE Trans. Automatic Control}}
\def\AUT{{\it Automatica}}
\def\ARC{{\it IFAC Annual Reviews in Control}}
\def\liminf{\lim_{t \to \infty}}
\def\L2e{{\cal L}_{2e}}
\def\rea{\mathds{R}}
\def\adj{\mbox{adj}}
\def\col{\mbox{col}}
\def\hal{{1 \over 2}}
\def\hal{{1 \over 2}}
\def\bfy{{\bf y}}
\begin{document}

\title{Parameter Estimation and Adaptive Control of Euler-Lagrange Systems Using the Power Balance Equation Parameterization}

\author{
\name{Jose Guadalupe Romero\textsuperscript{a}\thanks{ Jose Guadalupe Romero. Email: jose.romerovelazquez@itam.mx}, Romero Ortega\textsuperscript{a}\thanks{Romeo Ortega. Email: romeo.ortega@itam.mx} and  Alexey Bobtsov\textsuperscript{b}\thanks{ Alexey Bobtsov. Email: emmanuel.nuno@cucei.udg.mx}}
\affil{\textsuperscript{a}Departamento   Acad\'emico de Sistemas Digitales, ITAM,  R\'io hondo 1, Progreso Tizap\'an, 01080, Ciudad de M\'exico, M\'exico \\ \textsuperscript{b} Faculty of Control Systems and Robotics, ITMO University, Kronverkskiy av. 49, St. Petersburg, 197101, Russia.}
}

\maketitle

\begin{abstract}                          
It is widely recognized that the existing parameter estimators and adaptive controllers for robot manipulators are extremely complicated to be of practical use. This is mainly due to the fact that the existing parameterization includes the complicated signal and parameter relations introduced by the Coriolis and centrifugal forces matrix. In an insightful remark of their seminal paper Slotine and Li suggested to use the parameterization of the power balance equation, which avoids these terms---yielding significantly simpler designs. To the best of our knowledge, such an approach was never actually pursued in on-line implementations, because the excitation requirements for the consistent estimation of the parameters is ``very high". In this paper we use a recent technique of generation of ``exciting" regressors developed by the authors to overcome this fundamental problem. The result is applied to general Euler-Lagrange systems and the fundamental  advantages of the new parameterization are illustrated with comprehensive simulations of a 2 degrees-of-freedom robot manipulator. 
\end{abstract}

\section{Introduction}

It is well-known that the implementation of on-line parameter identifiers and adaptive controllers for Euler-Lagrange (EL) systems is complex and very computationally demanding  \citep{GAUKHAIJRR, KHADOM,NIESLO, ORTbook, SPOHUTVID}. This is mainly due to the fact that the parameterization that is used to obtain the linear regression equation (LRE) needed for their implementation---introduced in  \citep{KHOKAN}---is based on the full model of the system dynamics,  which involves complicated signal and parameter relations introduced in the Coriolis and centrifugal forces matrix. An additional difficulty for the application of these adaptive techniques is that, in order to obtain a {\em linear} relation in the LRE, it is necessary to {\em overparametrize} the vector of unknown parameters. This approach has very serious shortcomings, in particular the need of more {\em stringent excitation conditions} stemming from the fact that the parameter search takes place in a bigger dimensional space with nonunique minimizing solutions---see  \citep{LJUbook,SASBODbook} and the detailed discussion in  \citep[Section 1]{ORTetalaut21}. This situation has severely stymied the practical implementation of  these advanced identification and control techniques in many critical applications, for instance, for robot manipulators \citep{HUACHI,NIESLO,ZHAWE}. 

A computational complexity reduction is achieved restricting ourselves to the estimation of the so-called {\em base inertial parameters} introduced in \citep{GAUKHACDC}, which exploits the fact that the matrix defining the LRE of  \citep{KHOKAN} is not full rank, see \citep{SOUCOR} for some recent developments of this approach.  Another route, pursued by practitioners, to overcome this difficulty is to replace the complicated expression of the regressor using function approximation, leading to the so-called regressor-free adaptive controllers. See \citep{HUACHI} for a detailed description of this procedure applied to robot manipulators. Unfortunately, as always with function approximation-based techniques \citep{ORT}, although they might lead to successful designs, there is no solid theoretical guarantee that  the procedure will work---see \citep[Section 4.5]{HUACHI}.

In an insightful remark of their seminal paper \citep[Section 2.2]{SLOLIaut} the authors suggested to use the parameterization of the power balance equation. The main advantage of this approach is that, as mentioned above, the resulting LRE avoids the cumbersome terms related to the Coriolis and centrifugal forces matrix. This is a significant simplification that drastically reduces the complexity and computational demands. To the best of our knowledge, such an approach was never actually pursued, because the {\em excitation requirements} for the consistent estimation of the parameters is ``very high"---see \citep[Remark 16]{ORTetalaut21}.  One notable exception where this parameterization was used is in the work of Niemeyer and Slotine, where it was combined with the classical parameterization, in a composite adaptive controller. In an independent line of research, the use of the power balance equation for parameter estimation was also suggested in \citep{GAUKHACDC}, see \citep[Subsection 12.6.2]{KHADOM} for a detailed description of the model. In contrast with the proposal of \citep{SLOLIaut}, where in a standard way a LRE is used for on-line estimation, in the aforementioned papers the power balance equation is integrated in a series of intervals to generate an overdetermined set of linear equations, from which they identify the parameters via least-squares minimization procedures. This approach was also used in \citep{BLOCK} to identify the parameters of the pendubot. Interestingly, the author observed that the identification of the friction terms was very problematic with this method---an issue also discussed in \citep{PRUFERetal}, where the lack of excitation is identified as the culprit of this problem.

 In this paper we propose a procedure to overcome, for the first time, this fundamental problem. Towards this end we use a recent technique of generation of new LRE with ``exciting" regressors developed by the authors in \citep{BOBetal}. The result is applied to general Euler-Lagrange systems and the significant advantages of the new parameterization  of the power balance equation are illustrated with comprehensive simulations of a 2-degrees-of-freedom (DOF) robot manipulator. 

The development of the new LRE of  \citep{BOBetal} relies on the use of the following components: (i) the {\em dynamic regressor extension and mixing} (DREM) estimator  \citep{ARAetaltac},  which is a procedure that generates, from a $q$-dimensional LRE, $q$ scalar LREs, one for each of the unknown parameters; (ii) the {\em  parameter estimation based observer} (PEBO) proposed in  \citep{ORTetalscl}, later generalized in \citep{ORTetalaut}, that translates the problem of {\em state} estimation into a {\em parameter} estimation one; (iii) the energy pumping-and-damping injection principle of \citep{YIetal} to inject excitation to the new regressor. To make the paper self-contained all these derivations are briefly summarized in the Appendix.
For the sake of clarity, we restrict ourselves to the case of LRE. However, as indicated in the concluding remarks, the regressor generator proposed in the paper can be applied to the nonlinearly parameterized case.\\

\noindent {\bf Notation.} $I_n$ is the $n \times n$ identity matrix.  For $x \in \rea^n$ we denote the Euclidean norm as $|x|:=\sqrt {x^\top x}$. $\call_1$ and $\call_2$  denote the  absolute integrable and square integrable function spaces, respectively.

\section{A Power Balance Equation-based Parameterization of EL Systems}
\lab{sec2}
%
Following the insightful remark of  \citep[Section 2.2]{SLOLIaut}, in this section we derive a LRE for general EL systems using the power balance equation and compare its complexity with the ``classical" parameterization using the EL equations of motion, see also \citep[Subsection 12.6.2]{KHADOM}.  
\subsection{System dynamics}
\lab{subsec21}
%
In this paper we consider $n_q$-DOF, underactuated, EL systems with generalized coordinates $q\in \rea^{n_q}$ and control vector $\tau \in \rea^m$, with $m \leq n_q$, whose dynamics is described by the EL equations of motion
\begequ
\lab{elsys}
{ \frac{d}{dt} \left[\nabla_{\dot q}{\cal L}(q,\dot q)\right] - \nabla_q {\cal L}(q,\dot q)= G(q,\dot q) \tau,}
\endequ
where $\call:\rea^{n_q} \times \rea^{n_q} \to \rea$ is the Lagrangian function
$$
{\cal L}(q,\dot q) :=   {T}(q,\dot q) - {U}(q),
$$
with  ${T}:\rea^{n_q} \times \rea^{n_q} \to \rea$ the kinetic co-energy function, ${U}:\rea^{n_q} \to \rea$ the potential energy function, $G:\rea^{n_q} \times \rea^{n_q} \to \rea^{n_q \times m}$ is the input matrix, which is assumed {\em known}. For ${\cal L}$, the transposed gradient, with respect to $q$ and $\dot q$ are denoted by $\nabla_{q} {\cal L}$ and $\nabla_{\dot q} {\cal L}$, respectively. We restrict our attention to {\em simple} EL systems, whose kinetic energy is of the form
$$
{T}(q,\dot q)=\frac{1}{2}\dot q^{\top}M(q)\dot q,
$$
where $M:\rea^{n_q} \to \rea^{n_q \times n_q}$, $M(q) > 0,$ is the generalized inertia matrix. See \citep{ORTbook} for additional details on this model and many practical examples and   \citep{SPOHUTVID} for a detailed description of robot manipulators.

\begin{remark}
\lab{rem1}
It is possible to include in the dynamics \eqref{elsys} the effect of ``linear" friction terms of the form
$$
{ \frac{d}{dt} \left[\nabla_{\dot q}{\cal L}(q,\dot q)\right] - \nabla_q {\cal L}(q,\dot q)= G(q,\dot q) \tau +R\dot q,}
$$
with $R \in \rea^{n_q \times n_q}$ a diagonal, positive semidefinite matrix with {\em unknown} coefficients. As shown in Remark \ref{rem2} below, this effect can also be included in our analysis. However, for the sake of brevity, this additional term is omitted in the sequel.
\end{remark}
\subsection{Derivation of the new regression equation}
\lab{subsec22}
%
A first step in the design of parameter estimators is the derivation of a LRE for the unknown parameters of the EL system. Towards this end, we introduce the following parameterization of the inertia matrix $M(q)$ and the potential energy $U(q)$ 
\begin{equation}
\label{parmu}
M(q) = \sum_{i=1}^\ell   {\bf m}_i(q) \theta_i^M, \hspace{.5cm} {{U}}(q)=\sum_{j=1}^r {{U}}_j(q)  \theta_j^{{U}},
\end{equation}
 with  {\it known} matrices  ${\bf m}_i: \rea^{n_q} \rightarrow  \rea^{n_q\times n_q}$ and functions ${{U}}_j: \rea^{n_q} \rightarrow \rea$ and  $w:=\ell+r$ {\em unknown} parameters $\theta_i^M, \theta_j^U$, that we group together  in a single vector  as
\begequ
\lab{thesthe}
\theta:=\col(\theta_1^M, \cdots, \theta_l^M, \theta_1^U,  \cdots, \theta_r^U) \in \rea^w.
\endequ 
We are in position to present the following.
%
\begin{proposition} \em
\label{pro1}
Define the vector  $\Omega \in \rea^w$ via
\begin{align}
\nonumber
\dot z &= -\lambda(z+\omega)\\
\lab{Omega}
\Omega&=z+\omega
\end{align}
with $\lambda>0$ a design parameter and 
\begin{equation}
\label{omega}
\omega:= \left[ \begin{array}c {1\over 2} \dot q^\top  {\bf m}_1(q) \dot q \\ \vdots  \\ {1\over2} \dot q^\top {\bf m}_\ell(q) \dot  q \\   U_1(q) \\ \vdots \\ U_r(q)  \end{array} \right]   \in \rea^w.
\end{equation}

The EL system \eqref{elsys} satisfies the LRE 
\begin{equation}
\label{newlreel}
y= \Omega^\top  \theta
\end{equation}
where 
\begequ
\lab{yel}
\dot y=-\lambda y + \dot q^\top G\tau,
\endequ
and $\theta$  is defined via \eqref{thesthe}.
\end{proposition}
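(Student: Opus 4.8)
The plan is to build the regression equation on the \emph{power balance (energy conservation) property} of Euler--Lagrange systems, exploiting that under the affine parameterization \eqref{parmu} the stored energy is \emph{linear} in $\theta$ with a \emph{known} regressor. First I would introduce the total energy $H:=T(q,\dot q)+U(q)=\tfrac12\dot q^\top M(q)\dot q+U(q)$ and recall that for a simple EL system it equals the Legendre transform $H=\dot q^\top\nabla_{\dot q}\mathcal L(q,\dot q)-\mathcal L(q,\dot q)$. Differentiating this expression along the trajectories of \eqref{elsys}, the two $\ddot q$-terms cancel and one is left with the energy balance $\dot H=\dot q^\top\big(\tfrac{d}{dt}[\nabla_{\dot q}\mathcal L]-\nabla_q\mathcal L\big)=\dot q^\top G(q,\dot q)\tau$; crucially this identity contains no Coriolis/centrifugal terms and holds verbatim for underactuated $G$, since only the left-hand side of \eqref{elsys} enters. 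Substituting \eqref{parmu} then gives $H=\sum_{i=1}^\ell\big(\tfrac12\dot q^\top\mathbf m_i(q)\dot q\big)\theta_i^M+\sum_{j=1}^r U_j(q)\theta_j^U=\omega^\top\theta$, with $\omega$ and $\theta$ exactly as in \eqref{omega} and \eqref{thesthe}; hence $\dot\omega^\top\theta=\dot H=\dot q^\top G\tau$. This is already linear in $\theta$, but its regressor $\dot\omega$ involves accelerations, which is precisely why the first-order filter \eqref{Omega} is introduced.

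To close the argument I would simply differentiate $\Omega^\top\theta$. From \eqref{Omega}, $\dot\Omega=\dot z+\dot\omega=-\lambda(z+\omega)+\dot\omega=-\lambda\Omega+\dot\omega$, so $\tfrac{d}{dt}(\Omega^\top\theta)=-\lambda\,\Omega^\top\theta+\dot\omega^\top\theta=-\lambda\,\Omega^\top\theta+\dot q^\top G\tau$, which is exactly the scalar ODE \eqref{yel} defining $y$. Therefore $e:=\Omega^\top\theta-y$ satisfies $\dot e=-\lambda e$, i.e. $e(t)=e^{-\lambda t}\big(\Omega(0)^\top\theta-y(0)\big)$, and the LRE \eqref{newlreel} follows — exactly, if one initializes $y(0)=0$ and $z(0)=-\omega(0)$ (equivalently $\Omega(0)=0$), both computable from the measured initial state; otherwise modulo this exponentially decaying term, which is harmless for the subsequent estimator design. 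Equivalently, one may present the whole thing as applying the stable filter $\tfrac{1}{p+\lambda}$, $p:=\tfrac{d}{dt}$, to the identity $\dot\omega^\top\theta=\dot q^\top G\tau$ and identifying, via $\tfrac{p}{p+\lambda}=1-\tfrac{\lambda}{p+\lambda}$, the filtered left-hand side with $\Omega^\top\theta$ and the filtered right-hand side with $y$.

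I do not expect a genuine obstacle: the statement is essentially a filtered restatement of the classical passivity identity of EL systems, the new content being only the recognition that the stored energy is a \emph{known} regressor times $\theta$. The two computations that call for a little care are the cancellation of the $\ddot q$-terms when differentiating the Legendre transform, and the use of the homogeneous-quadratic form of the kinetic energy to write $\dot q^\top\nabla_{\dot q}\mathcal L-\mathcal L=T+U=\omega^\top\theta$; both are routine. The friction extension of Remark \ref{rem1} is handled in the same manner: the extra term $\dot q^\top R\dot q$ appearing on the right of the power balance is a known signal that, after the same filtering, is appended to the regressor, with the unknown diagonal entries of $R$ appended to $\theta$.
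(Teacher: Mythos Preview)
Your proposal is correct and follows essentially the same route as the paper: both hinge on the power balance identity $\dot{\mathcal E}=\dot q^\top G\tau$ with $\mathcal E=\omega^\top\theta$, and then apply the first-order filter $H(p)=\tfrac{1}{p+\lambda}$ (equivalently, the state realization \eqref{Omega}) to obtain $y=\Omega^\top\theta$. If anything, you are slightly more explicit than the paper in deriving the passivity identity via the Legendre transform and in pointing out that, for generic initial conditions of the filter, the equality \eqref{newlreel} holds modulo an $e^{-\lambda t}$ transient---a caveat the paper leaves implicit.
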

%
\begin{proof}
As shown in \citep[Proposition 2.5]{ORTbook} EL systems define a  {passive operator}  $G(q,\dot q) \tau \mapsto  \dot q$ with storage function 
\begequ
\lab{enefun}
{\cal E}(q,\dot q):={T}(q,\dot q) + {U}(q).
\endequ
More precisely, it satisfies the  {power balance} equation
\begin{eqnarray}
\dot {\mathcal E}= \dot q^\top G\tau.
\label{dotE}
\end{eqnarray}
Now, applying the LTI filter 
\begin{equation}
\label{fil}
H(p)={1 \over {p+\lambda}}
\end{equation}
where  $p:={d \over dt}$, to both sides of \eqref{dotE} we get
\begin{equation}
\label{newy}
p H(p)[ {\mathcal E}]= y,
\end{equation}
with the filter state realization given in \eqref{yel}. On the other hand, using the parameterization \eqref{parmu}, the energy function \eqref{enefun} can be written as 
 \begin{eqnarray}
 {\mathcal E}(q,\dot q)&=& {1\over 2} \sum_{i=1}^\ell  \dot q^\top {\bf  m}_i(q) \dot q {\theta_i^M} +  \sum_{j=1}^r U_j(q)  {\theta_j^U} \nonumber \\
 &=& \omega^\top {\theta}
 \end{eqnarray}  
where we used \eqref{thesthe} and  \eqref{omega}. The proof is completed noting that \eqref{Omega} is a state realization of the filter equation
$$
\Omega:=p H(p)[\omega].
$$
\end{proof}

\begin{remark}
\lab{rem2}
In relation to Remark \ref{rem1}, the effect of the friction terms in the power balance equation \eqref{dotE} is as follows
$$
\dot {\mathcal E}= \dot q^\top G\tau - \dot q^\top R \dot q.
$$
Hence, the incorporation of the unknown matrix $R$ in the LRE \eqref{newlreel} leads to the new LRE
$$
y= \Omega^\top  \theta+ \Omega_R^\top  \theta_R,
$$
where $\theta_R:=\col(R_1,R_2,\dots,R_{n_q})$ are the diagonal elements of the matrix $R$ and
$$
\Omega_R:=H(p)[\col(\dot q^2_1,\dot q^2_2,\dots,\dot q^2_{n_q})].
$$
\end{remark}

\begin{remark}
\lab{rem3}
It is important to note that the parameters $\theta$ in  \eqref{parmu} are not the {\em physical} parameters of the system. But they are obtained {\em overparameterizing} the truly physical ones to obtain a linear parameterization. This fact will become clear in  the 2-dof example treated below. See also \citep{ORTetalaut21} where a procedure to identify the true physical parameters, using a nonlinear parameterization, is proposed. 
\end{remark}
\subsection{Comparison with the ``classical" parameterization}
\lab{subsec23}
%
The fact that it is possible to use the power balance equation \eqref{dotE} to obtain a parameterization of the robot manipulator dynamics was indicated in an insightful remark in \citep{SLOLIaut}, but was not further elaborated. To the best of the authors' knowledge all results on parameter estimation and adaptive control of this kind of systems have relied on the far more involved parameterization of the full dynamics \eqref{parmu}, first proposed in \citep{KHOKAN} and cleverly exploited in \citep{SLOLItac}, which we briefly review below. The main reason why this parameterization was not used is because of the stringent excitation requirements that it imposes. The main contribution of our paper is to show that using the DREM procedure \citep{ARAetaltac} and the new LRE proposed in  \citep{BOBetal} it is possible to generate alternative LRE---with exciting regressors---to overcome this drawback.

To obtain the ``classical" parameterization it is necessary to write the dynamics of the EL system \eqref{elsys} as
\begin{equation}
\label{EL2}
{d \over dt }\left[  M(q) \dot q \right] - \frac{1}{2} \nabla_q \left[  \dot q^{\top}M(q)\dot q\right]   + \nabla {U}(q) = G(q)\tau.
\end{equation}

We are in position to present the following well known result  \citep{KHOKAN}, which is given here for the sake of completeness.
\begin{proposition} \em
\label{pro6}
The EL system \eqref{EL2} satisfies the vector LRE 
\begin{equation}
\label{clalre}
\bfy= \Psi  \theta,
\end{equation}
with $\theta$ defined in \eqref{parmu} and \eqref{thesthe},
\begequ
\lab{yelcla}
\dot \bfy=- \lambda \bfy+G\tau,
\endequ
and  the ${n_q \times w}$  regressor matrix is defined a
\begin{align}
\lab{clareg}
\dot \Psi= -\lambda \Psi+  \left[ \begin{array}c  p  {\bf m}_1(q)  \dot q -{1\over 2} \nabla_q  (\dot q^\top  {\bf m}_1(q) \dot q ) \\  \vdots  \\ p {\bf m}_\ell(q)\dot q-{1\over2} \nabla_q (\dot q^\top {\bf m}_\ell(q) \dot q) \\ \nabla {{U}}_1(q) \\  \vdots  \\ \nabla {{U}}_r(q) 
 \end{array} \right]^\top. 
\end{align}
\end{proposition}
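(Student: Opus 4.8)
The plan is to mimic the proof of Proposition~\ref{pro1}, replacing the power balance equation by the full equations of motion \eqref{EL2} and exploiting the fact that the parameterization \eqref{parmu} is \emph{linear} in $\theta$.

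First I would substitute \eqref{parmu} into the left-hand side of \eqref{EL2}. Since the components of $\theta$ are constants, they commute with the operators ${d\over dt}$ and $\nabla_q$, hence
$$
{d \over dt}\big[M(q)\dot q\big]=\sum_{i=1}^\ell \big(p\,{\bf m}_i(q)\dot q\big)\,\theta_i^M,\qquad {1\over2}\nabla_q\big[\dot q^\top M(q)\dot q\big]=\sum_{i=1}^\ell {1\over2}\nabla_q\big(\dot q^\top {\bf m}_i(q)\dot q\big)\,\theta_i^M,
$$
and $\nabla U(q)=\sum_{j=1}^r \nabla U_j(q)\,\theta_j^U$. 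Collecting these terms shows that the left-hand side of \eqref{EL2} equals $\Psi_0(q,\dot q,\ddot q)\,\theta$, where $\Psi_0$ is exactly the transpose of the bracketed matrix in \eqref{clareg}. Thus \eqref{EL2} is equivalent to the (not yet implementable, since it involves $\ddot q$) identity $\Psi_0\theta=G\tau$.

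Next I would apply the stable, proper LTI filter \eqref{fil} to both sides. Because $\theta$ is constant and $H(p)$ is linear, $H(p)[\Psi_0\theta]=\big(H(p)[\Psi_0]\big)\theta$; setting $\Psi:=H(p)[\Psi_0]$ gives precisely the state realization \eqref{clareg}, i.e. $\dot\Psi=-\lambda\Psi+\Psi_0$, while $H(p)[G\tau]=\bfy$ is the realization \eqref{yelcla}. This yields $\bfy=\Psi\theta$, which is \eqref{clalre}, modulo an exponentially decaying term coming from the filter initial conditions, which is standard to neglect.

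The only genuine work is the bookkeeping in the first step, namely checking column by column that the substitution of \eqref{parmu} reproduces the entries of \eqref{clareg}; everything else is the same filtering argument used for Proposition~\ref{pro1}. A minor point worth flagging is that $\Psi_0$, and hence the driving term in \eqref{clareg} as written, still contains $\ddot q$ through $p\,{\bf m}_i(q)\dot q$; the usual remedy is to observe that $H(p)[p\,{\bf m}_i(q)\dot q]={\bf m}_i(q)\dot q-\lambda H(p)[{\bf m}_i(q)\dot q]$ can be implemented with a first-order filter and no differentiation, but this refinement is not needed for the statement as given.
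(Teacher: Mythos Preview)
Your proof is correct and follows essentially the same approach as the paper: substitute the linear parameterization \eqref{parmu} into the equations of motion \eqref{EL2} and apply the LTI filter $H(p)$ of \eqref{fil} to obtain the LRE. The only cosmetic difference is the order of the two steps---you parameterize first and then filter, while the paper filters first and then parameterizes---which is immaterial since $H(p)$ is linear and $\theta$ is constant. Your remark on implementing $H(p)[p\,{\bf m}_i(q)\dot q]$ without $\ddot q$ is a useful addition not explicitly spelled out in the paper.
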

\begin{proof}
Applying the LTI filter \eqref{fil} to both sides of \eqref{EL2} we get
\begin{equation}
p H(p)[ M(q) \dot q ]-\frac{1}{2}  H(p)\left[ \nabla_q (  \dot q^{\top}M(q)\dot q )\right] +H(p)[ \nabla {{U}}(q)] = \bfy,
\label{newyel}
\end{equation}
where we have used \eqref{yelcla}. Now, using the parameterization \eqref{parmu},  the left hand side of  \eqref{newyel}  can be written as 
\begin{align}
 \Psi  \theta= &\sum_{i=1}^\ell  H(p)  \Big [  p  [{\bf m}_i(q)  \dot q]- \frac{1}{2} \nabla_q (  \dot q^\top  {\bf m}_i(q)  \dot q  )\Big ]\theta_i^M + \nonumber \\ &\sum_{j=1}^r H(p)[  \nabla {{U}}_j(q)]  \theta_j^{{U}}, 
  \label{Ec1}
\end{align}
where we used \eqref{thesthe}.
This completes the proof.
\end{proof} 

\begin{remark}
\lab{rem4}
The reduction of the computational complexity of the new LRE \eqref{newlreel}, with respect to the one of the ``classical" LRE \eqref{clalre}, can hardly be overestimated. It suffices to compare the $w$-dimensional vector regressor  \eqref{omega} with the  ${n_q \times w}$  regressor matrix \eqref{clareg}.
\end{remark}
%
\section{Generation of  ``Exciting" Regressors for the New Parameterization}
\lab{sec3}
%
As mentioned above the main drawback of the new parameterization  \eqref{Omega}-\eqref{yel} is that the excitation requirements for consistent estimation are very ``high". In this section we apply the procedure proposed in \citep{BOBetal} to generate new LRE where the regressor has ``improved" excitation properties. \\

The generation of new scalar LREs proceeds along the following steps.

\begenu[{\bf S1}]
\item  Apply DREM to the original LRE to generate {\em scalar} LREs---one for each one of the parameters to be estimated. This procedure is summarized in Proposition \ref{pro4} in Appendix A, where the dynamic extension is done following Kreisselmeier's suggestion  \citep{KRE}.\footnote{See  \citep{ORTNIKGER,ORTetaltac} for other methods to construct the dynamic extension.}
\item Construct the new LRE folllowing the procedure proposed in  point {\bf P1} of Proposition \ref{pro5} in Appendix A.
\item Select the ``input" signals $\boldsymbol u_i$ of the LRE generator as suggested in point {\bf P2} of Proposition \ref{pro5} in Appendix A.
\endenu

Once the new LREs have been generated the estimator design is completed applying---for instance---a simple gradient descent-based parameter adaptation algorithm like the one suggested in Proposition \ref{pro6} in Appendix A. 

In summary, the proposed estimator with the new LRE proceeds from the original LRE---that is the one obtained from the power balance equation \eqref{newlreel} or the classical LRE \eqref{clalre}---then implements the dynamic extension  \eqref{eq1},  \eqref{eq2} and \eqref{eq3} and wraps-up the design with the gradient estimator  \eqref{eq4}.

The following remarks pertaining to the convergence properties of the estimator based on the new LRE are in order---see \citep{BOBetal} for additional details.
\begenu
\item[{\bf R1}] The condition \eqref{conalpdel} can, in principle, be easily satisfied with a suitable definition of the function $\boldsymbol \alpha(t)$. However, this restriction on this design parameter will, in general, limit our ability to generate an ``exciting" regressor $\Phi_{21}$ for the new LRE \eqref{newlre}, affecting the performance of the estimator.
\item[{\bf R2}]  Point {\bf P2} of Proposition \ref{pro6} guarantees the asymptotic convergence of the estimator, provided the condition $\liminf \Phi_{11}(t) \neq \sqrt{2\beta}$ is satisfied. As explained in \citep{BOBetal}, this is a technical condition needed to ensure the boundedness of all the signals, and it is ``generically true". 
\endenu
%
\section{Indirect Adaptive Control}
\lab{sec4}
%
In this section we combine---in a certainty-equivalent way---the parameter estimator proposed in the previous section with a globally stabilizing controller. 

The formulation of the adaptive control problem requires  the following stabilizability condition for the case of known parameters.

{\bf Assumption 1}
Given a desired bounded trajectory for the state vector $(q_\star(t), \dot q_\star(t)) \in \rea^{n_q}\times \rea^{n_q}$. Define the state tracking error $\col(\tilde q,\dot {\tilde q}):=\col(q-q_\star,\dot q - \dot q_\star).$ There exists a mapping $\beta: \rea^{n_q} \times \rea^{n_q} \times \rea^w \times \rea_{\ge 0} \to \rea^m$, such that the system
$$
M(q) \ddot q+ C(q, \dot q)\dot q + \nabla {{U}}(q) =G(q) \beta(q,\dot q,\theta,t),
$$
has an {\em error dynamics} 
$$
 \left[ \begin{array}{c} \dot {\tilde q}   \\
    \ddot {\tilde q} \end{array}\right]=f_\star(\tilde q, \dot {\tilde q},\theta,t)
$$
with $f_\star: \rea^{n_q} \times \rea^{n_q} \times \rea^w \times \rea_{\ge 0} \to \rea^{2n_q}$, whose origin is globally exponentially stable.  
\\

The {\em control objective} is then to design a parameter estimator such that the (certainty-equivalent) adaptive control $\tau=\beta(q,\dot q,\hat \theta,t)$ ensures global asymptotic tracking, that is, 
\begequ
\lab{asyconel}
\liminf \col(\tilde q(t),\dot {\tilde q}(t)) = 0,
\endequ
with all signals bounded. 

We are in position to present the main result of this section. The proof exploits the fact that we have {\em consistent estimates}, and it follows {\em verbatim} the proof of \citep[Proposition 6]{ORTetalaut}---see also  \citep[Proposition 8]{ORTetalaut}---hence, it is omitted for brevity.

\begin{proposition} \em
\lab{pro3}
Consider the EL system \eqref{EL2} verifying Assumption {\bf 1} in closed-loop with the certainty-equivalent adaptive control 
 \begin{equation}
 \label{con1}
 \tau=\beta(q,\dot q, \hat \theta,t) , 
 \end{equation}
where the estimated parameters are generated, proceeding from the LRE  \eqref{newlreel}, as suggested in the previous section.   Assume the conditions of Propositions \ref{pro5} and  \ref{pro6} (that ensure a consistent estimation) hold.  Under these conditions,  \eqref{asyconel} holds with all signals bounded.
\end{proposition}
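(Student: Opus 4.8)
The plan is to recast the closed loop as a (uniformly) globally exponentially stable error system perturbed by a term that vanishes with the parameter error $\tilde\theta:=\hat\theta-\theta$, and then to combine an input-to-state stability (ISS) estimate with the consistency of the estimator guaranteed by Propositions~\ref{pro5}--\ref{pro6}. First I would substitute the certainty-equivalent control \eqref{con1} into the EL model \eqref{EL2}; adding and subtracting $G(q)\beta(q,\dot q,\theta,t)$ and invoking Assumption~\textbf{1}, the tracking error $x_e:=\col(\tilde q,\dot{\tilde q})$ satisfies
\begin{equation}
\dot x_e = f_\star(x_e,\theta,t) + \Delta_\beta(q,\dot q,\hat\theta,t),
\label{pf:cl}
\end{equation}
where $\Delta_\beta$ gathers the control terms that depend on $\hat\theta$ and, by construction, $\Delta_\beta|_{\tilde\theta=0}=0$, so that $|\Delta_\beta|\le \kappa(|x_e|)\,|\tilde\theta|$ on bounded sets for a continuous $\kappa$ (this uses the smoothness of $\beta$ in $\theta$ implicit in Assumption~\textbf{1}).

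The key structural observation that unlocks the argument is that the gradient/DREM scheme of Appendix~A produces a parameter error of non-increasing norm, $|\tilde\theta(t)|\le|\tilde\theta(0)|$ for all $t$; this a priori bound is independent of the plant trajectory and is what breaks the otherwise circular estimator--plant interconnection. Next I would invoke a converse Lyapunov function $V(x_e,t)$ for the nominal dynamics $\dot x_e=f_\star(x_e,\theta,t)$, with the standard quadratic sandwich and gradient bounds and $\dot V|_{\mathrm{nom}}\le -c\,|x_e|^2$. Evaluating $\dot V$ along \eqref{pf:cl} gives an ISS-type inequality with respect to the input $\tilde\theta$; since $\tilde\theta\in\call_\infty$, this yields boundedness of $x_e$, hence of $(q,\dot q,\tau)$, and then---using the ``generically true'' technical condition $\liminf\Phi_{11}(t)\neq\sqrt{2\beta}$ of \textbf{R2}---boundedness of all internal estimator signals. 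In particular there is no finite escape time and the hypotheses of Propositions~\ref{pro5}--\ref{pro6} hold along the closed-loop trajectory.

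With boundedness in hand, Propositions~\ref{pro5}--\ref{pro6} guarantee $\tilde\theta(t)\to 0$, because the excitation injected into the new regressor $\Phi_{21}$ (via the energy pumping-and-damping construction of \citep{BOBetal,YIetal}) is ensured by design, \emph{irrespective} of the plant's motion. Feeding this back into the ISS property of \eqref{pf:cl} through its converging-input converging-state version gives $x_e(t)\to 0$, i.e.\ \eqref{asyconel}, with all signals bounded---this is precisely the line of reasoning of \citep[Proposition~6]{ORTetalaut}, which can be followed verbatim. The step I expect to be the main obstacle is the apparent feedback loop between the estimator (driven by $q,\dot q$ and $\tau$) and the plant (driven by $\hat\theta$): one must ensure that the estimation-error bound used to close the ISS argument does not itself presuppose an a priori bound on the plant signals. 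This is resolved exactly by the monotonicity of $|\tilde\theta|$ noted above, which renders the perturbation in \eqref{pf:cl} a priori bounded and converts the interconnection into a triangular one amenable to the standard ISS/bootstrap argument; the only regularity one must additionally invoke is that making the nominal GES system ISS with respect to the additive perturbation, which is part of the hypotheses borrowed from \citep[Proposition~6]{ORTetalaut}.
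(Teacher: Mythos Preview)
Your proposal is correct and follows essentially the same route the paper indicates: the paper omits the proof and states that it ``exploits the fact that we have consistent estimates'' and ``follows \emph{verbatim} the proof of \citep[Proposition~6]{ORTetalaut}'', which is precisely the ISS-plus-consistent-estimation argument you have sketched (and you cite the same reference). The monotonicity $|\tilde\theta(t)|\le|\tilde\theta(0)|$ of the DREM gradient error that you single out as the device breaking the estimator--plant loop is exactly the right ingredient, and your identification of the remaining regularity hypothesis (that the GES nominal error system be ISS with respect to the additive perturbation) as something inherited from \citep[Proposition~6]{ORTetalaut} is accurate.
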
 

\begin{remark}
\lab{rem5} 
For {\em fully actuated} systems, {\em i.e.}, $m=n_q$, the mapping $ \beta(q,\dot q,\theta,t)$ can be chosen  as the {\em Slotine-Li Controller} that, in the known parameter case, it is given by \citep{SLOLItac}
\begequ
\lab{slolicon}
\beta(q,\dot q,\theta,t)= M(q) \ddot q_r + C(q,\dot q) \dot q_r + g(q) +K_1 s,
\endequ
where we defined the signals
$$
\dot q_r := \dot q_\star  - K_2 \tilde q,\quad s := \dot {\tilde q} + K_2 \tilde q,
$$
with diagonal, positive definite gains  $K_1$, $K_2$  $\in \rea^{n_q \times n_q}$ . The closed-loop system is then
$$
	M(q)\dot s +[ C(q,\dot q) + K_1] s = 0,\;\dot {\tilde q}+ K_2 \tilde q=   s,
$$
that---as indicated in \citep[Remark 4.5]{ORTbook}---has a globally exponentially stable equilibrium at the origin. 
\end{remark}
%

\section{Application to a Fully Actuated $2$-dof Robot Manipulator}
\lab{sec5}

In this section we present the two LRE derived above  for the  classical fully actuated 2-dof robot manipulator \citep{Craig}. 
\subsection{Derivation of the new   \eqref{newlreel} and classical   \eqref{clalre} LREs}
\lab{subsec51}
%

The equation of motion of the robot is given by \eqref{EL2} with  
\begin{eqnarray}
\label{MU}
M(q)=&\left[ \begin{array}{cc} \theta_1 +2 \theta_2 \cos(q_2) & \theta_3 +\theta_2 \cos(q_2)  \\ \theta_3 +\theta_2\cos(q_2)   &  \theta_3
      \end{array}\right] \nonumber  \\
       U(q)=&\theta_4 g \left(1+\sin(q_1+q_2)\right) + \theta_5 g \left(1+\sin(q_1)\right),  
\end{eqnarray}
$G=I_2$, with $g$ the gravitational constant and the unknown parameters 
$$
\theta=\left[ \begin{array}{c} l_2^2 m_2 + l_1^2 (m_1+m_2)\\l_1 l_2 m_2\\l_2^2 m_2\\l_2 m_2\\l_1 (m_1+m_2) \end{array} \right],
$$
where $l_i$ is the the length of the link $i$ with mass $m_i$ for $i=1,2$. Now, following \eqref{parmu} we define 
\begin{align}
{\bf m}_1 & :=\left[ \begin{array}{cc} 1 & 0 \\ 0& 0 \end{array} \right], {\bf m}_2(q_2):=\cos(q_2)\left[ \begin{array}{cc}  2  & 1 \\ 1& 0  \end{array} \right] \\{\bf m}_3&:= \left[ \begin{array}{cc}  0 & 1 \\ 1& 1 \end{array} \right], \;
U_1(q):=g[1+\sin(q_1+q_2)]\\U_2(q_1)&:=g[1+\sin(q_1)].
\end{align}
Thus, the LRE  \eqref{newlreel} holds with 
$$
y=H(p)[\dot q^\top \tau],
$$
and the regressor vector
\begin{align}
\Omega=pH(p)\left[ \begin{array}{c} {1\over 2} {\dot q}_1^2\\({\dot q}_1^2 +{\dot q}_1 {\dot q}_2)\cos(q_2) \\{1\over 2} {\dot q}_2^2 +{\dot q}_1 {\dot q}_2\\g(1+\sin(q_1 +q_2))\\g(1+ \sin(q_1)) \end{array} \right].
\end{align}

On the other hand, the classical LRE \eqref{clalre} is given 
by $\bfy=H(p) \tau$ and  
$$
\Psi=H(p) \left[ \begin{array}{ll}
p \dot q_1 & 0\\
p \cos(q_2) (2\dot q_1 + \dot q_2) &  \left.\begin{array}{l} p \cos(q_2) \dot q_1+ \\ \sin(q_2) (\dot q^2_1 + \dot q_1 \dot q_2)  \end{array}\right.\\
p\dot q_2 & p(\dot q_1 + \dot q_2)\\
g \cos(q_1+q_2) & g \cos(q_1+q_2)\\
g\cos(q_1) & 0
\end{array} \hspace{-.2cm}\right]^\top.
$$
\subsection{Comparative simulation results}
\lab{subsec52}
%
In this section, we present comparative simulations of the parameter estimators and adaptive control  using the classical   \eqref{clalre} and the new parameterization  \eqref{newlreel}, with and without DREM and using the new LRE or not. First, we give the results for open-loop parameter identification for different input signals. Then, we present the ones obtained for the Slotine-Li  adaptive controller. 

The unknown parameters of the robot are taken as $l_1=0.7$m; $l_2=0.8$m; $m_1=1.5$kg; and $m_2=0.5$kg. The filter \eqref{fil} is implemented with $\lambda=1$ and zero initial conditions. For all simulations, the initial estimates is $\hat\theta_i(0)=0$,   initial velocities $\dot q (0)=\col(0, 0)$ and initial positions  $q(0)=\col(0.6\pi,0.7\pi)$rad.

For the new LRE generator of Proposition \ref{pro4}, we  set $ \beta = \frac{1}{4}$, and the free function $\alpha$ is selected as
$
\alpha(t) = \sin\Big(\frac{1}{5} t\Big).
$

\subsubsection*{Open loop parameter estimation}
\label{subsubsec521}
To evaluate the effect of the richness content of the input signal on the estimator performance we consider the following signals $\tau$.  
\begin{eqnarray}
\begin{aligned}
    \tau_a(t) & = \col \left(e^{-0.4t},\ e^{-0.5t}\right)\\
    \tau_b(t) & =
    \left\{
    \begin{aligned}
         \col \left(1, \, 3 \right) & &t\in[0,2] \mbox{s}
         \\
         0 & & t> 2\mbox{s}
    \end{aligned}
    \right. \\
    \tau_c(t) & = {\cos(4t)\over 2 + t} \col \left(1, \, 3 \right). 
\end{aligned}
\label{taus}
\end{eqnarray}
Notice that these signals are  interval exciting, but clearly not persistently exciting (PE). Therefore, there is no guarantee that the  standard gradient estimator for the classical LRE \eqref{clalre}, which is given by 
\begequ
\lab{paa}
\dot{\hat{\theta}} =  \Gamma  \Psi^\top  \left({\bf y}  - \Psi \hat \theta\right), 
\endequ
with $\Gamma=\Gamma^\top \in \rea^{5\times 5}$, positive definite, will ensure parameter convergence. Actually, we show in the simulations that this estimator has in all cases a steady-state error. On the other hand, we show that using  DREM the estimated parameters converge but after a certain time a drift away from the true value is observed for all three input signals. The latter undesired behavior, which is probably due to the loss of excitation and the accumulation of numerical integration errors, is avoided if we additionally use the new LRE. In these simulations we set $\Gamma=25I_5$ for the estimator \eqref{paa} and $\gamma_i=25$ for the DREM ones.

The result of these simulations is presented in   Figs \ref{fig1}-\ref{fig3}, where we also show the behavior of the regressor signals $\Delta$ and $\Phi_{21}$.   As seen from the figures, in  the three cases, $\Delta(t) \to 0$ loosing excitation, while $\Phi_{21}(t) \not \to 0$, guaranteeing PE and consequently exponential convergence. Moreover, we show in the figures the regressors $\Phi_{21_i}$ of the five elements of the vector theta, but the plots are almost overlapped, therefore, hardly distinguishable.
This behavior explains the long term drift of DREM with the original LRE, which is avoided by the use of the new LRE.

\begin{figure}[htp]
\centering
  \includegraphics[width=0.81\textwidth]{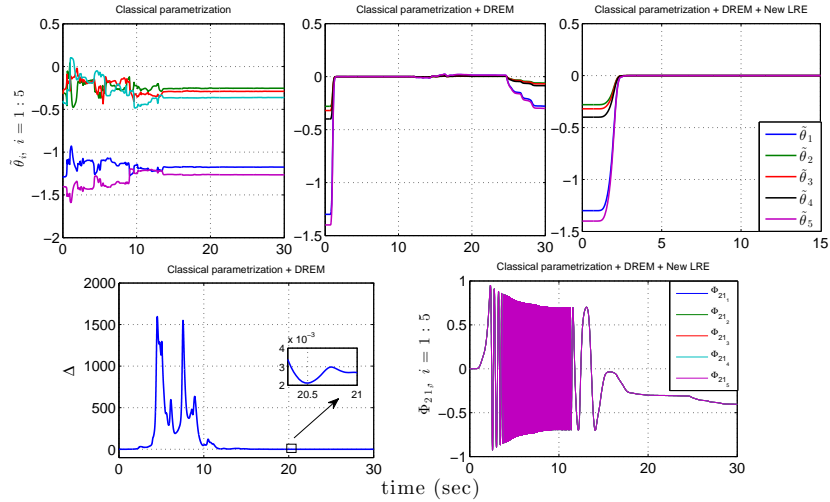}  
  \caption{Transient behavior of the signals $\tilde \theta_i$, $\Delta$ and  $\Phi_{21_i}$ using $\tau_a$.}
 \label{fig1}
\end{figure}
\begin{figure}[htp]
\centering
  \includegraphics[width=.81\textwidth]{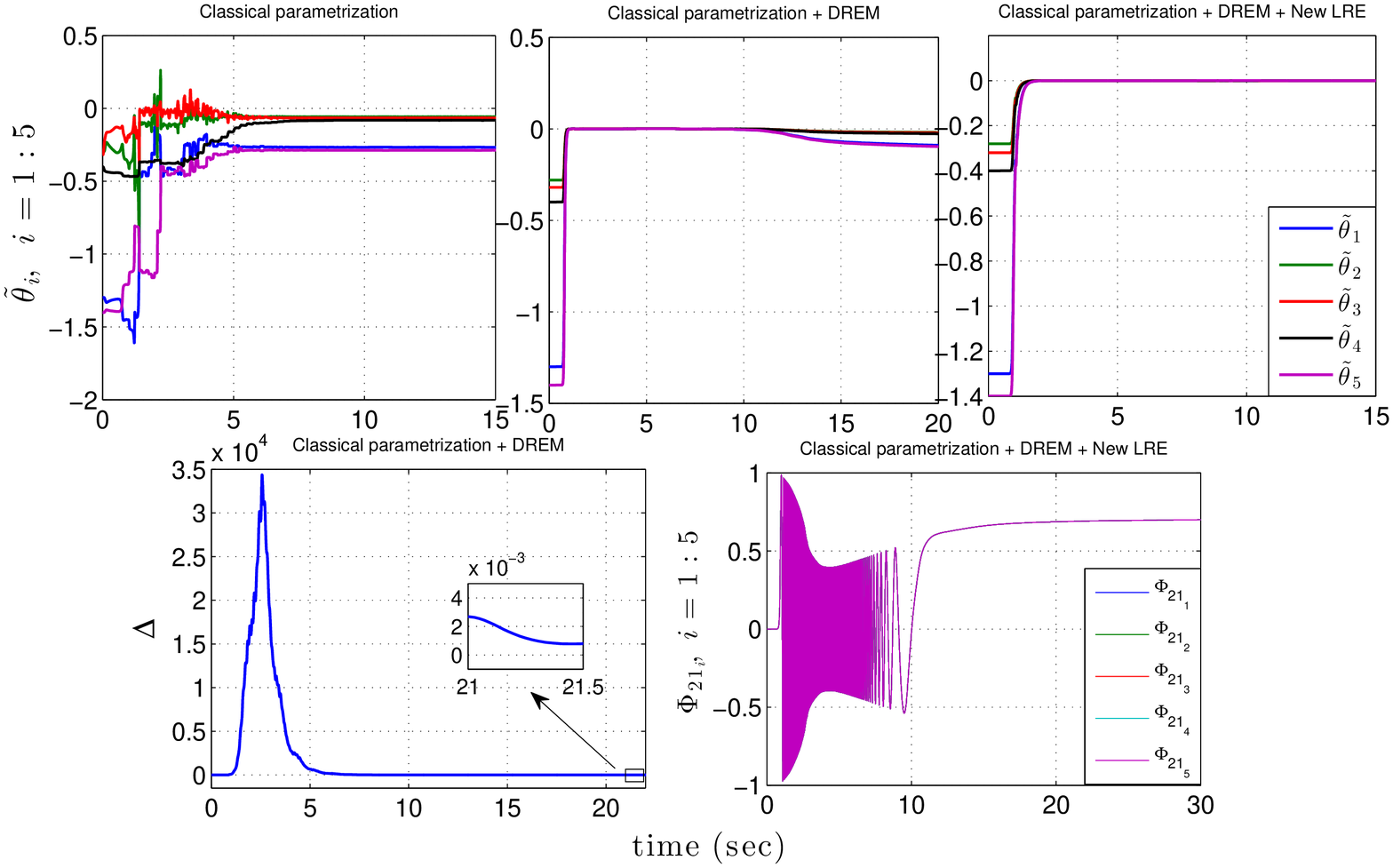}  
  \caption{Transient behavior of the signals $\tilde \theta_i$ using $\tau_b$.}
  \label{fig2}
\end{figure}
\begin{figure}[htp]
\centering
  \includegraphics[width=.81\textwidth]{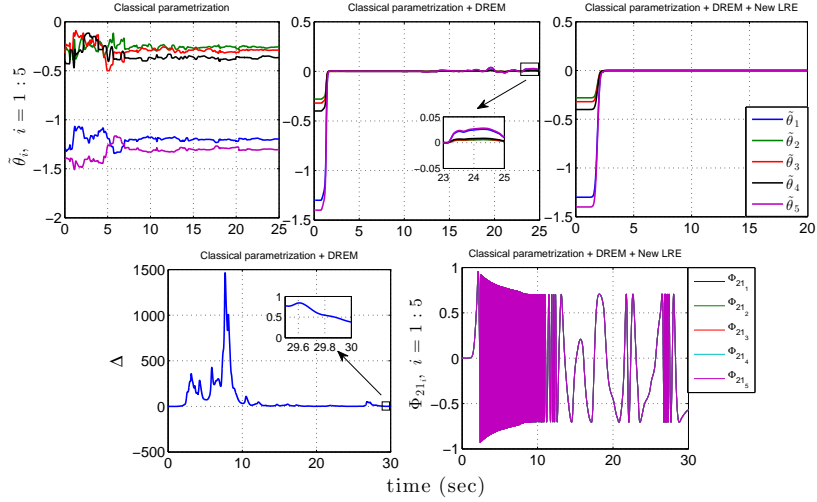}  
  \caption{Transient behavior of the signals $\tilde \theta_i$ using $\tau_c$.}
    \label{fig3}
\end{figure}

In the next series of simulations we used the power balance equation-based parameterization, with the adaptation gains set  as   $\Gamma=100I_5$ for the estimator \eqref{paa} and $\gamma_i=100$ for the DREM ones. The result of the simulations is depicted in Figs. \ref{fig4}-\ref{fig6}, which shows a similar scenario as the classical parameterization. One notable difference between the two parameterizations is that with the new one the excitation of $\Phi_{21}$ is lost. Also, notice that using only DREM---without the new LRE---the parameters do not converge, revealing the critical importance of this modification.

\begin{figure}[htp]
\centering
  \includegraphics[width=.83\textwidth]{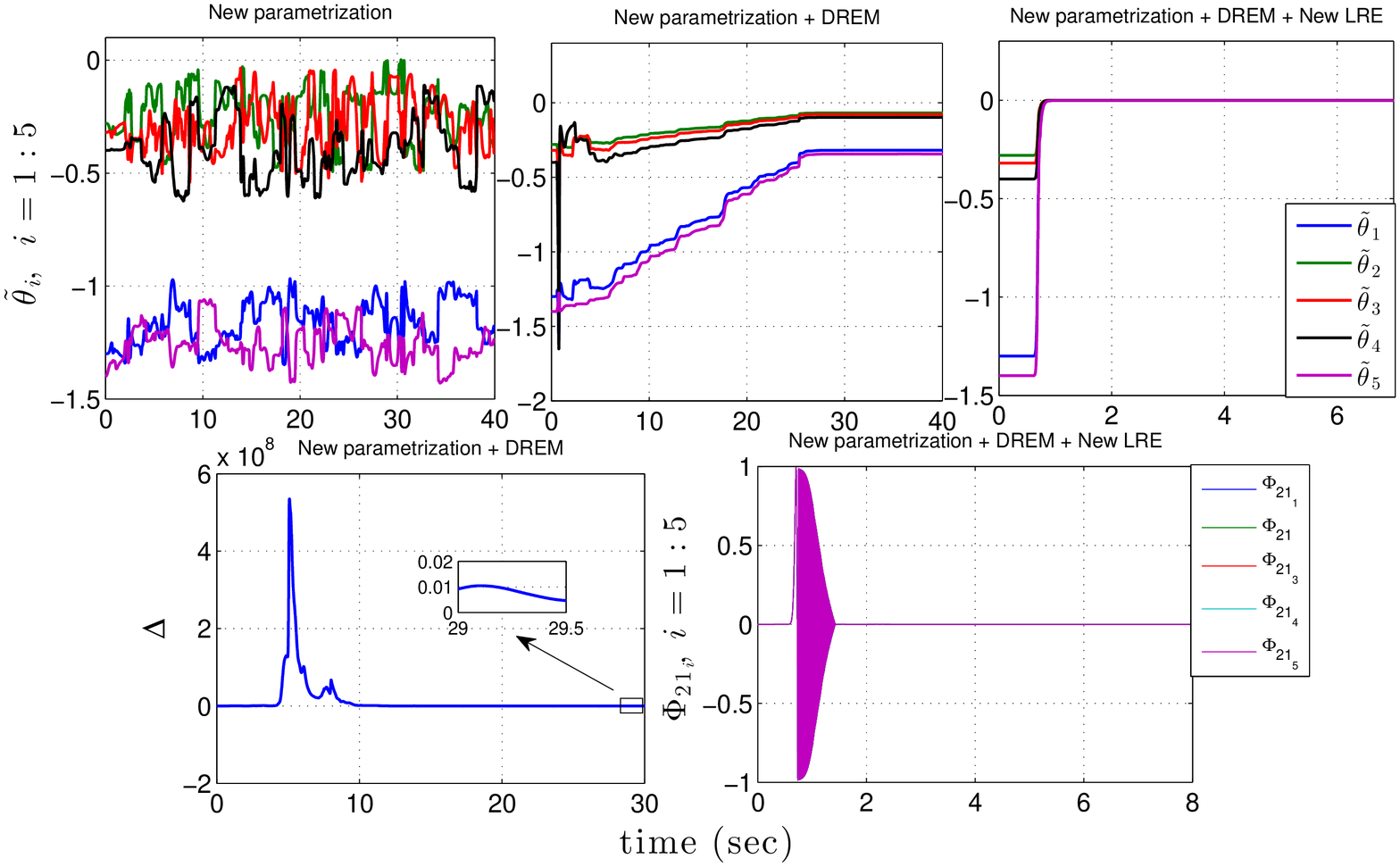}  
  \caption{Transient behavior of the signals $\tilde \theta_i$, $\Delta$ and $\Phi_{21_i}$ using $\tau_a$.}
  \label{fig4}
\end{figure}

\begin{figure}[htp]
\centering
  \includegraphics[width=.83\textwidth]{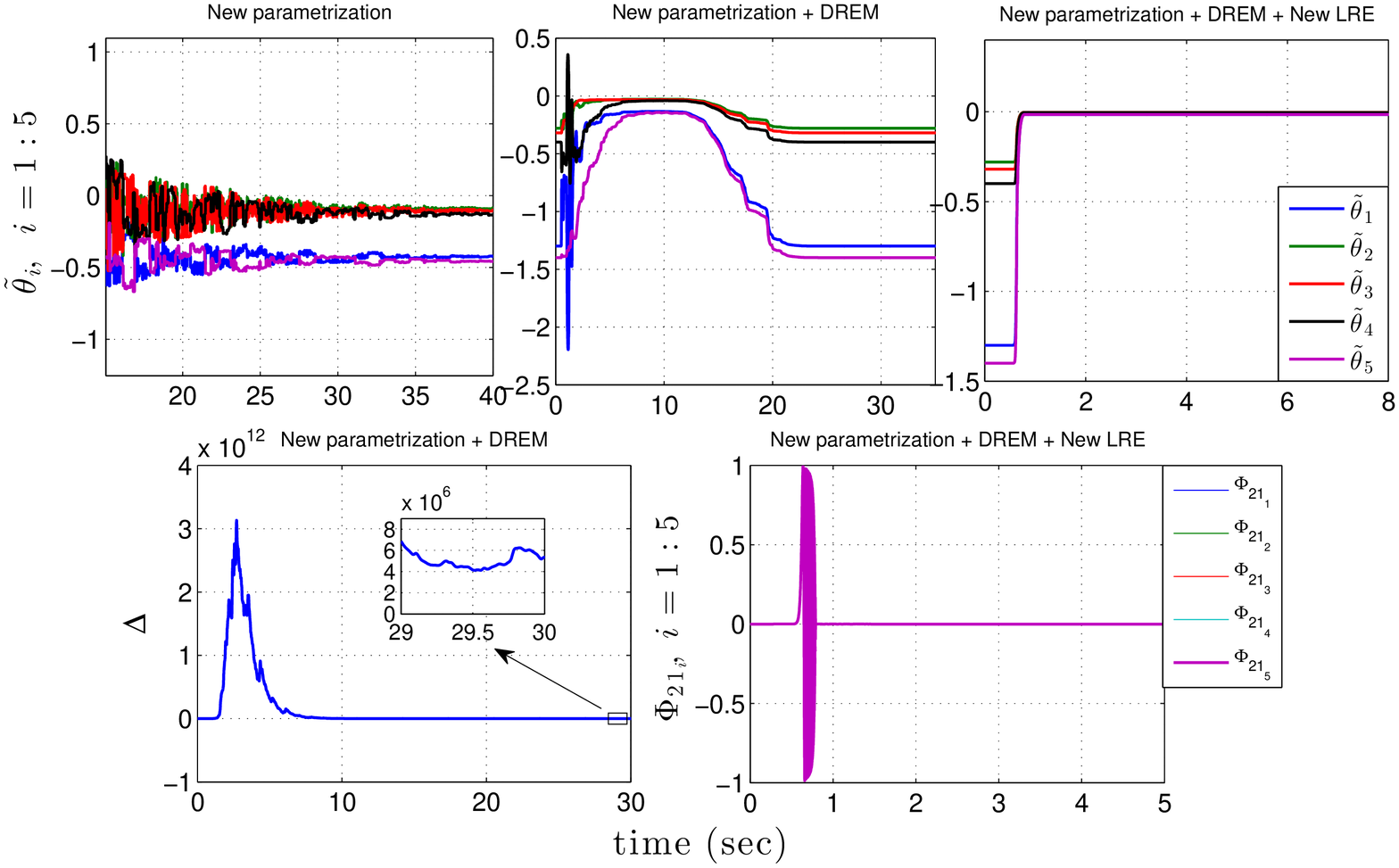}  
  \caption{Transient behavior of the signals $\tilde \theta_i$, $\Delta$ and $\Phi_{21_i}$ using $\tau_b$.}
  \label{fig5}
\end{figure}

\begin{figure}[htp]
\centering
  \includegraphics[width=0.83\textwidth]{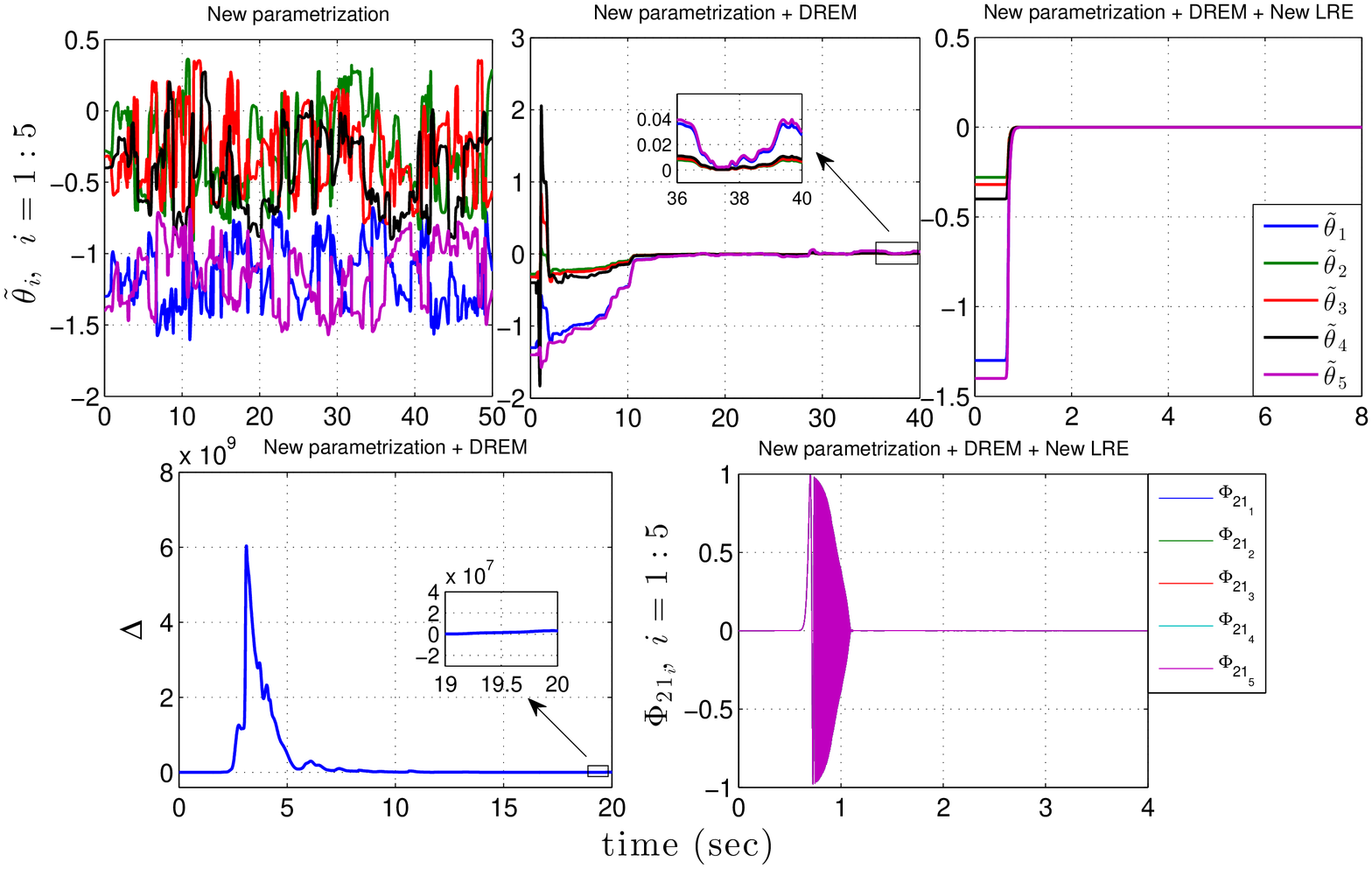}  
  \caption{Transient behavior of the signals $\tilde \theta_i$,  $\Delta$ and $\Phi_{21_i}$  using $\tau_c$.}
  \label{fig6}
\end{figure}

\subsubsection*{Adaptive control} Our last series of simulations pertains to the implementation of the Slotine-Li adaptive controller of Remark \ref{rem5} in regulation and tracking. In both cases we selected the control gains $K_1=7 I_2$ and $K_2=4I_2$ and the adaptation gain $\Gamma= 25I_5$ and $\gamma_i= 10$ for the regulation case and $\gamma_i=25$ for the tracking problem.  We  propose as desired equilibrium point $q_\star=\col(.2\pi,\, 0.3\pi)$, and for the tracking problem 
$$
q_\star=\left[ \begin{array}{c}0.4 \pi \sin(0.4 t)+0.3 \pi \sin(0.3t) + 0.2 \pi \\ 0.3 \pi \cos( 0.3 t)-0.1 \pi \cos(0.5 t) + 0.3\pi \end{array} \right].
$$
The result of the simulations is depicted in Figs. \ref{fig7}-\ref{fig11} for the case of regulation and Figs. \ref{fig12}-\ref{fig16} for tracking, where we also show the behavior of the input signal. From Fig. \ref{fig7} we see that for the classical parameterization the regulation objective is achieved without parameter convergence, while the new parameterization is unsuccessful.  This is consistent with the well-known fact that there is no unique set of controllers gains that achieve  the regulation objective. In Fig. \ref{fig8} we observe that the addition of DREM to the classical parameterization corrects the lack of parameter convergence for the classical parameterization but it is of no use for the new one---for both cases the excitation is lost, as shown in Fig. \ref{fig9}. The performance for both parametrizations is drastically improved with the use of the new LRE as shown in  Fig. \ref{fig10}, ensuring in both cases PE as depicted in Fig. \ref{fig11}.  

The same pattern as in the case of regulation is observed for tracking: Fig. \ref{fig12} shows the good behavior of the old parameterization while the new one does not achieve neither the control objective nor parameter convergence.  Again, the addition of the new LRE significantly improves the behavior for both parameterizations underscoring (see Figs. \ref{fig13}-\ref{fig15}), once again, the critical importance of this modification.

\begin{figure}[htp]
\centering
  \includegraphics[width=0.83\textwidth]{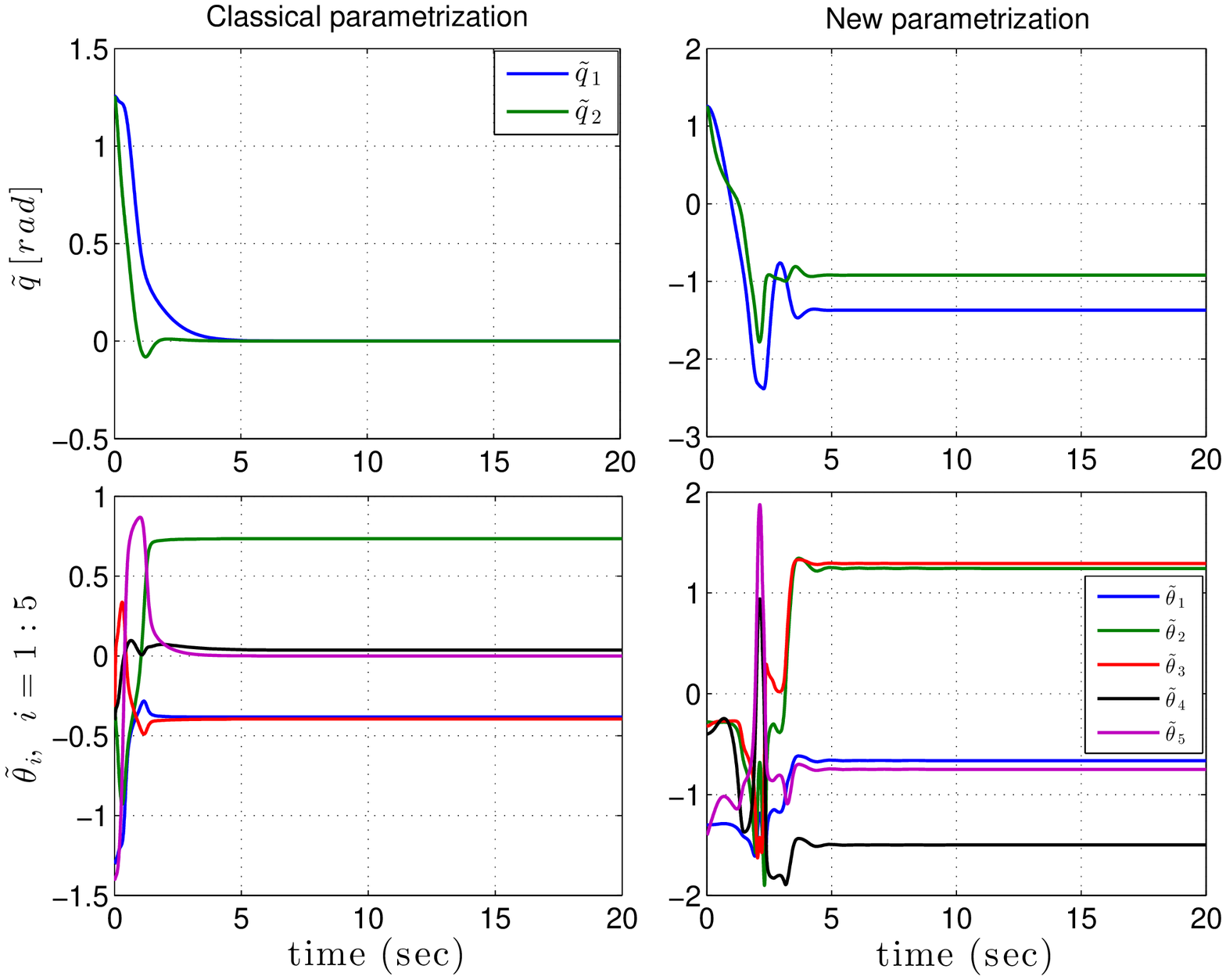}  
  \caption{Transient behavior of the signals $\tilde q_i$  and $\tilde \theta_i$}
  \label{fig7}
\end{figure}

\begin{figure}[htp]
\centering
  \includegraphics[width=0.83\textwidth]{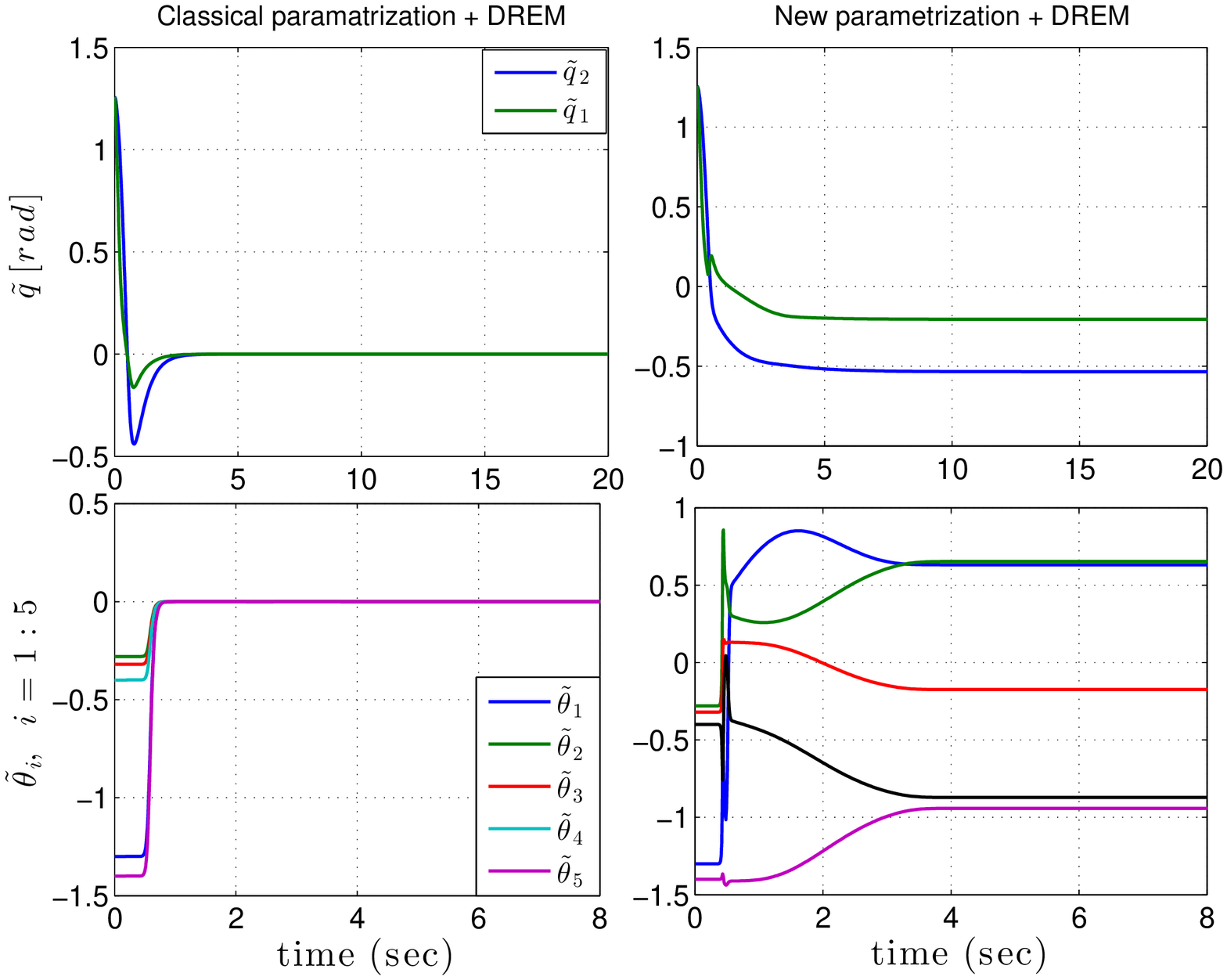}  
  \caption{Transient behavior of the signals $\tilde q_i$  and $\tilde \theta_i$}
\label{fig8}
\end{figure}
\begin{figure}[htp]
\centering
  \includegraphics[width=0.83\textwidth]{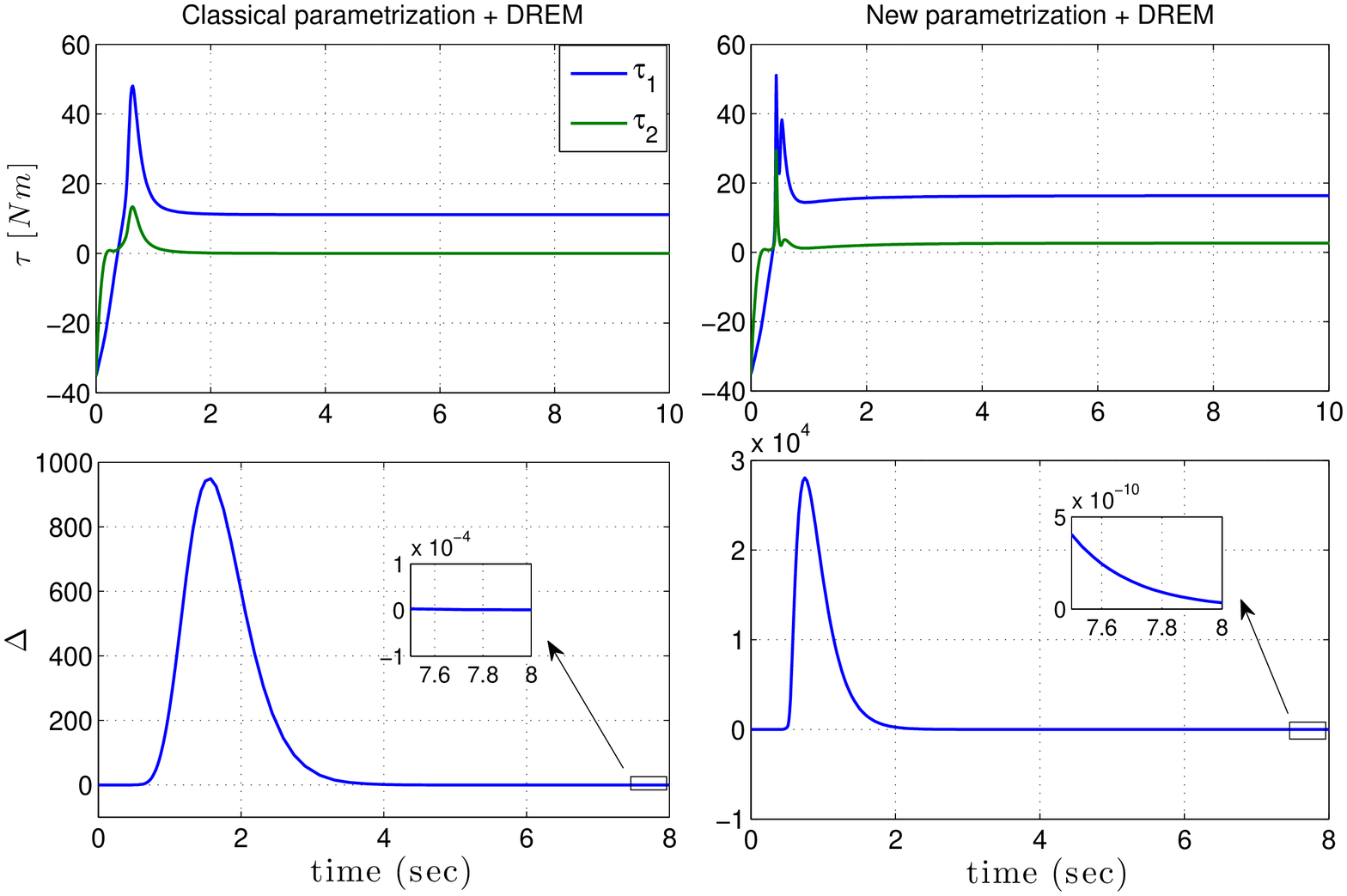}  
  \caption{Transient behavior of the input control $\tau$ and the signal $\Delta$ }
 \label{fig9}
\end{figure}
\begin{figure}[htp]
\centering
  \includegraphics[width=0.83\textwidth]{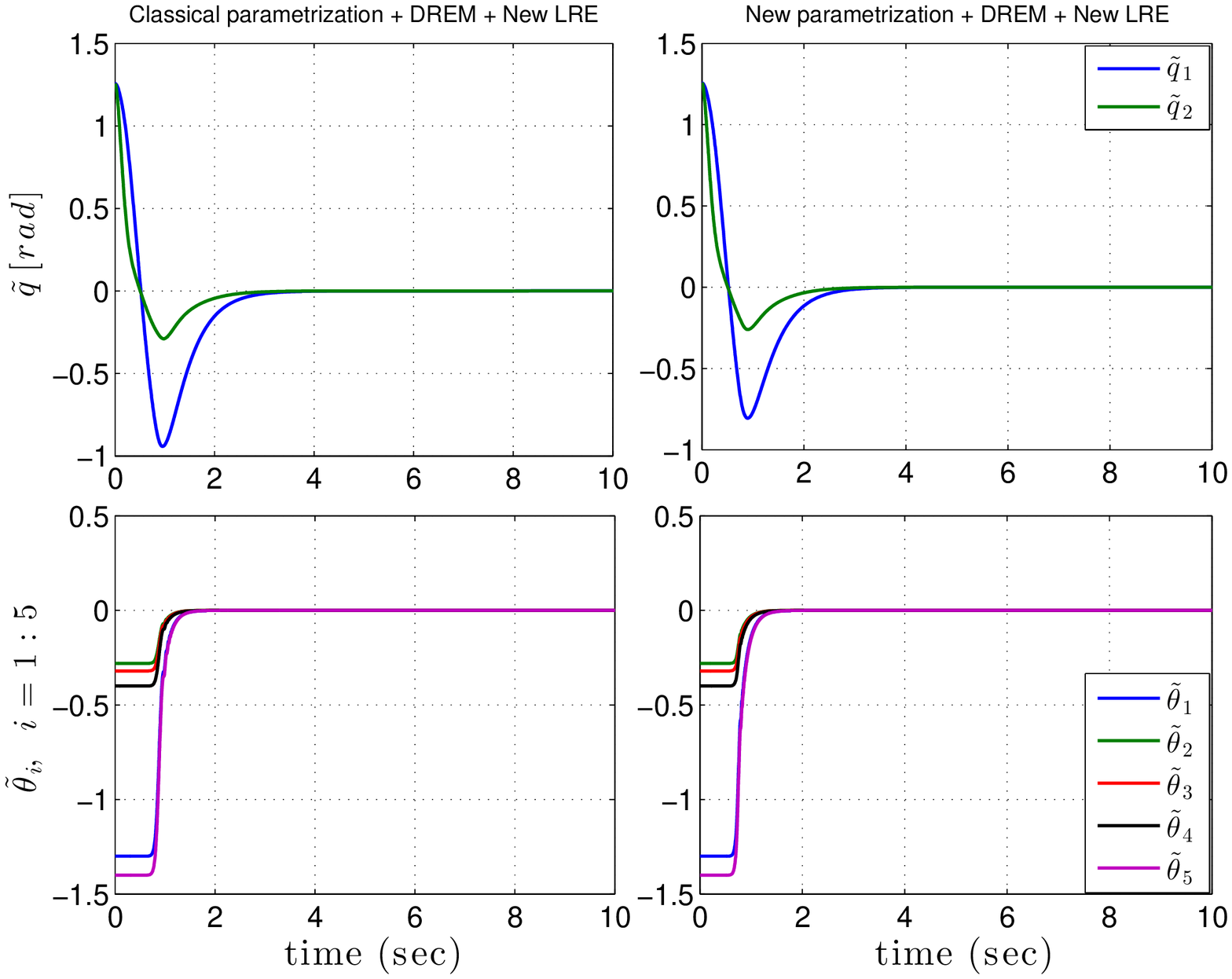}  
  \caption{Transient behavior of the signals $\tilde q_i$  and $\tilde \theta_i$}
 \label{fig10}
\end{figure}
\begin{figure}[htp]
\centering
  \includegraphics[width=0.83\textwidth]{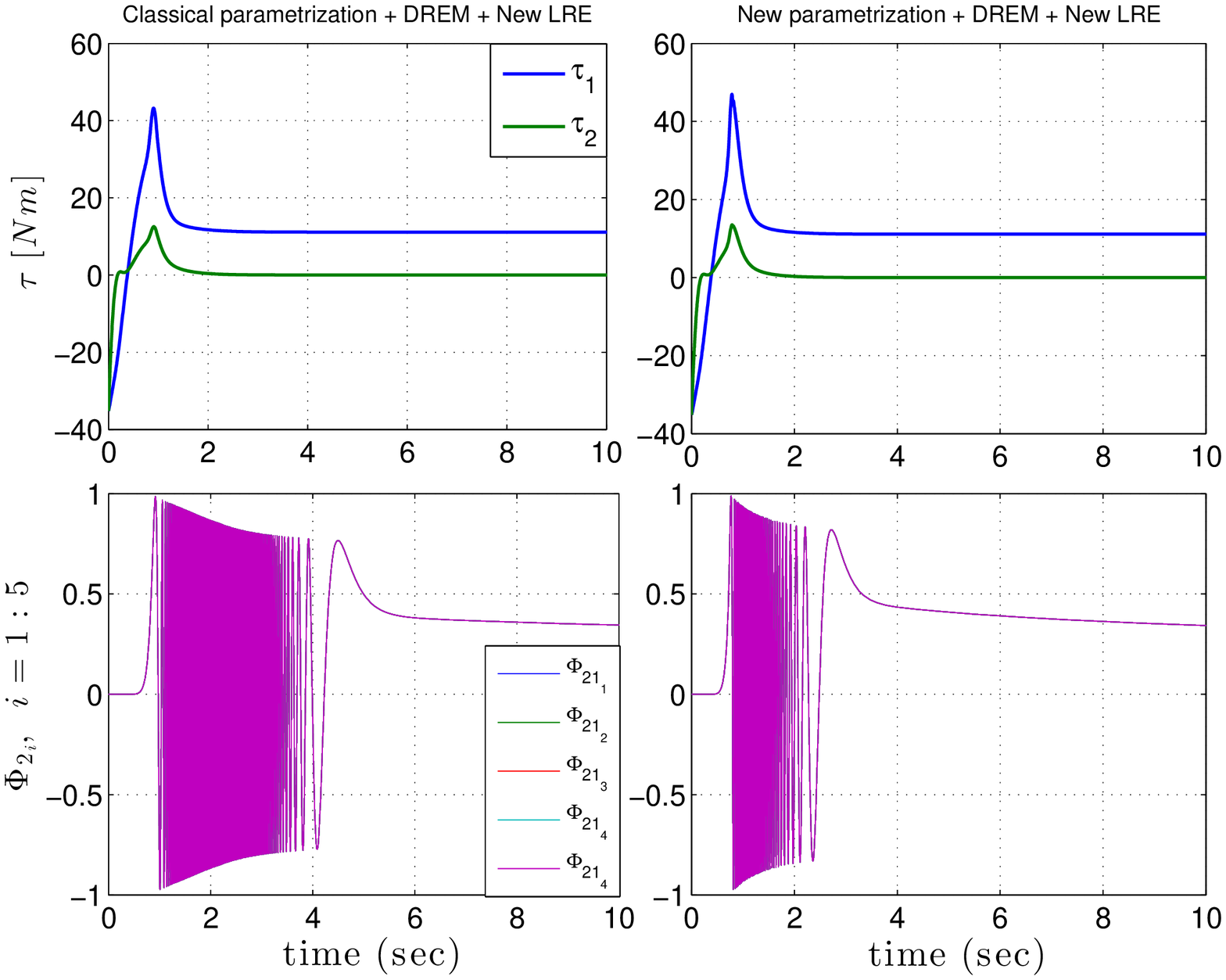}  
  \caption{Transient behavior of the input control $\tau$   and exciting signal $\Phi_{21_i}$}
 \label{fig11}
\end{figure}


\begin{figure}[htp]
\centering
  \includegraphics[width=0.83\textwidth]{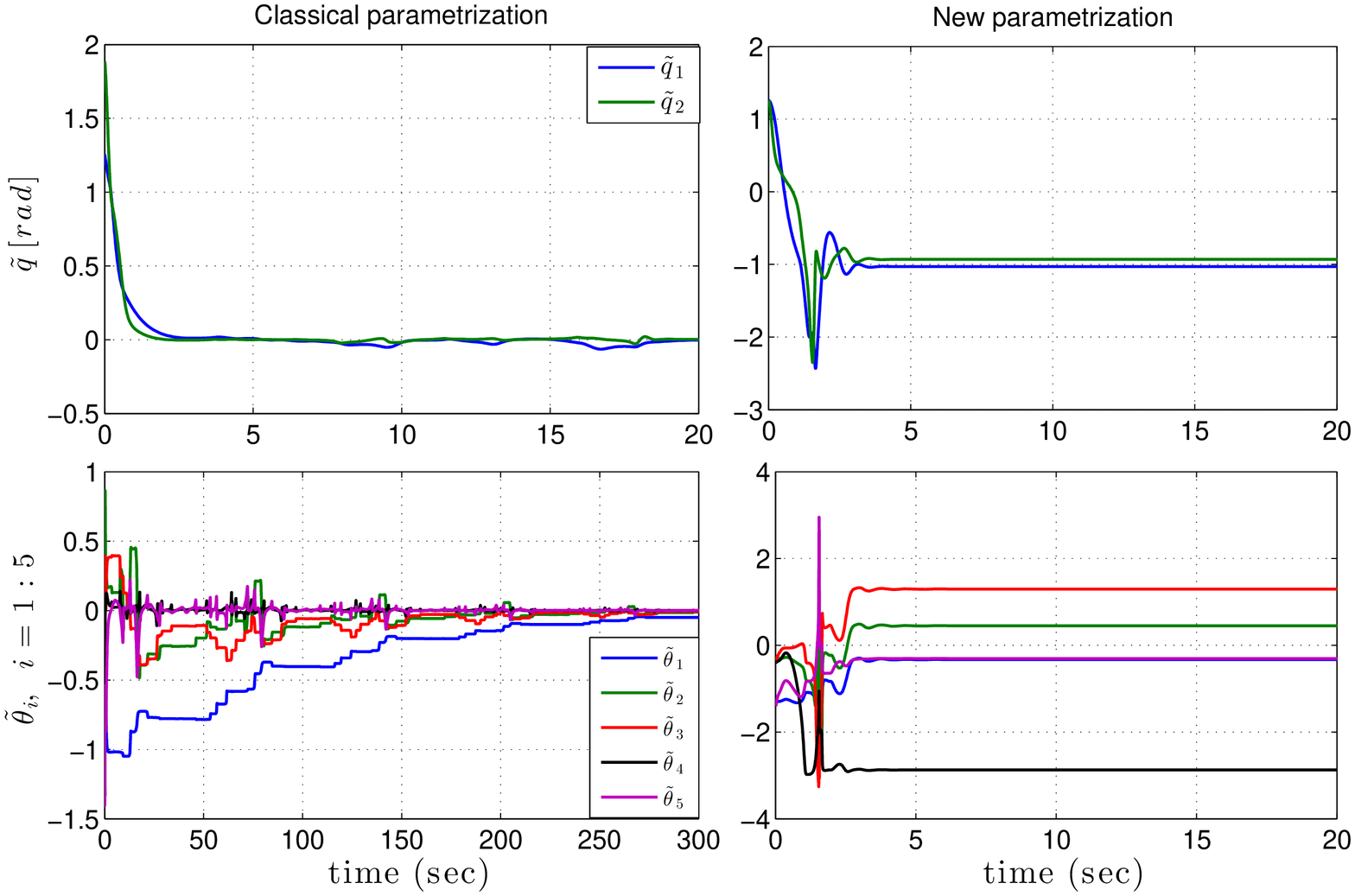}  
  \caption{Transient behavior of the signals $\tilde q_i$  and $\tilde \theta_i$.}
  \label{fig12}
\end{figure}

\begin{figure}[htp]
\centering
  \includegraphics[width=0.83\textwidth]{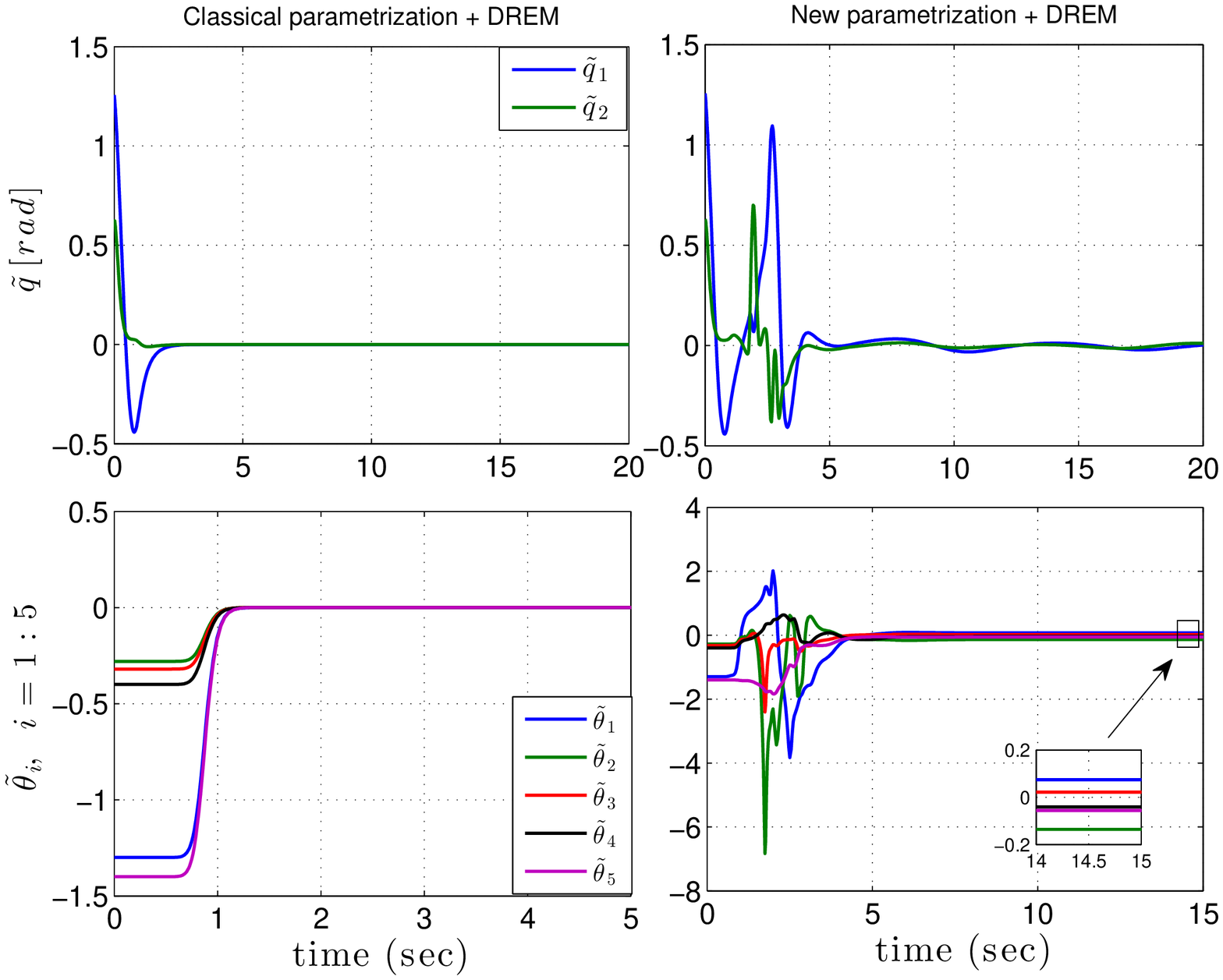}  
  \caption{Transient behavior of the signals $\tilde q_i$  and $\tilde \theta_i$}
\label{fig13}
\end{figure}
\begin{figure}[htp]
\centering
  \includegraphics[width=0.83\textwidth]{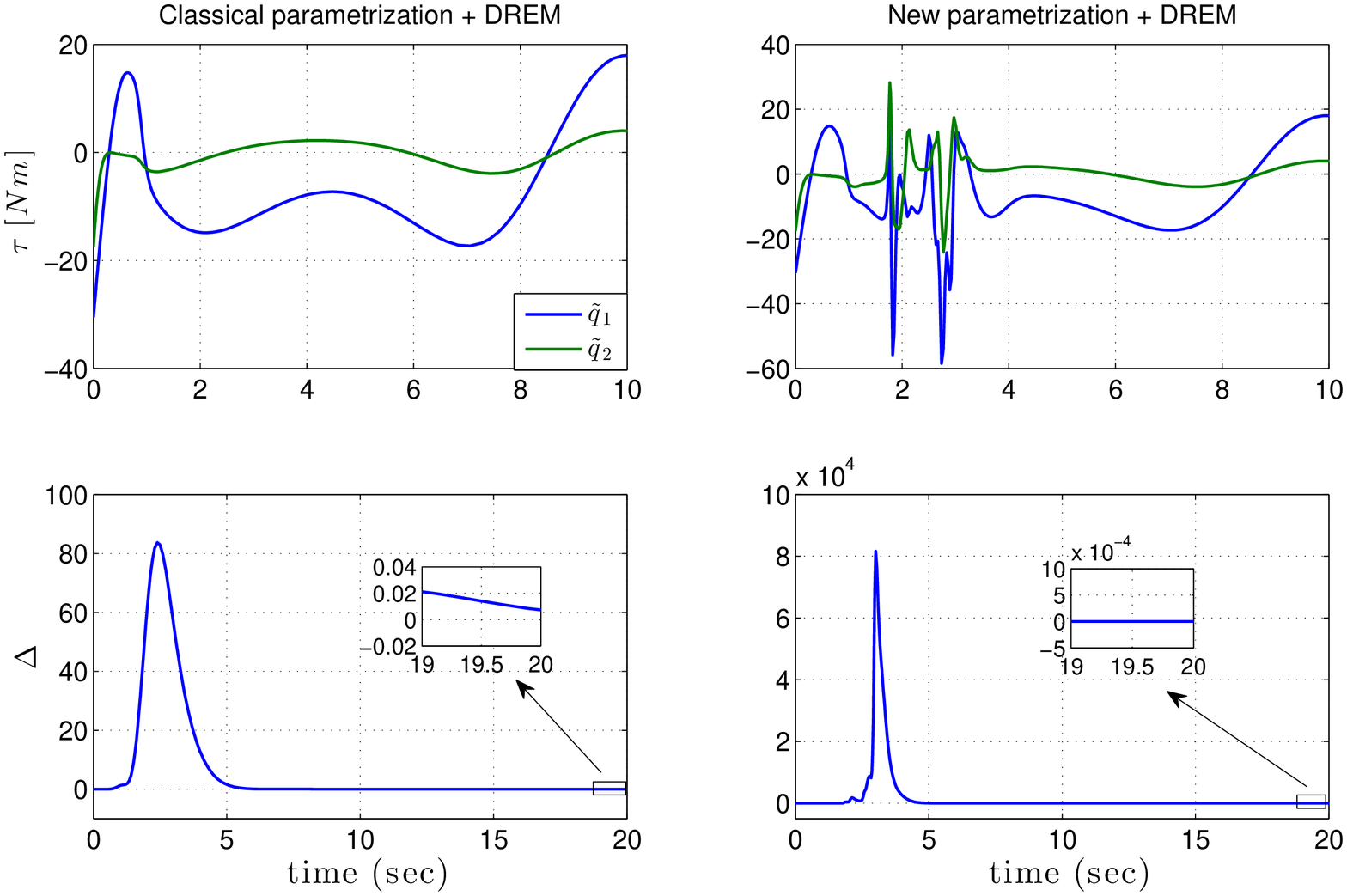}  
  \caption{Transient behavior of the input control $\tau$ and the signal $\Delta$ }
 \label{fig14}
\end{figure}
\begin{figure}[htp]
\centering
  \includegraphics[width=0.83\textwidth]{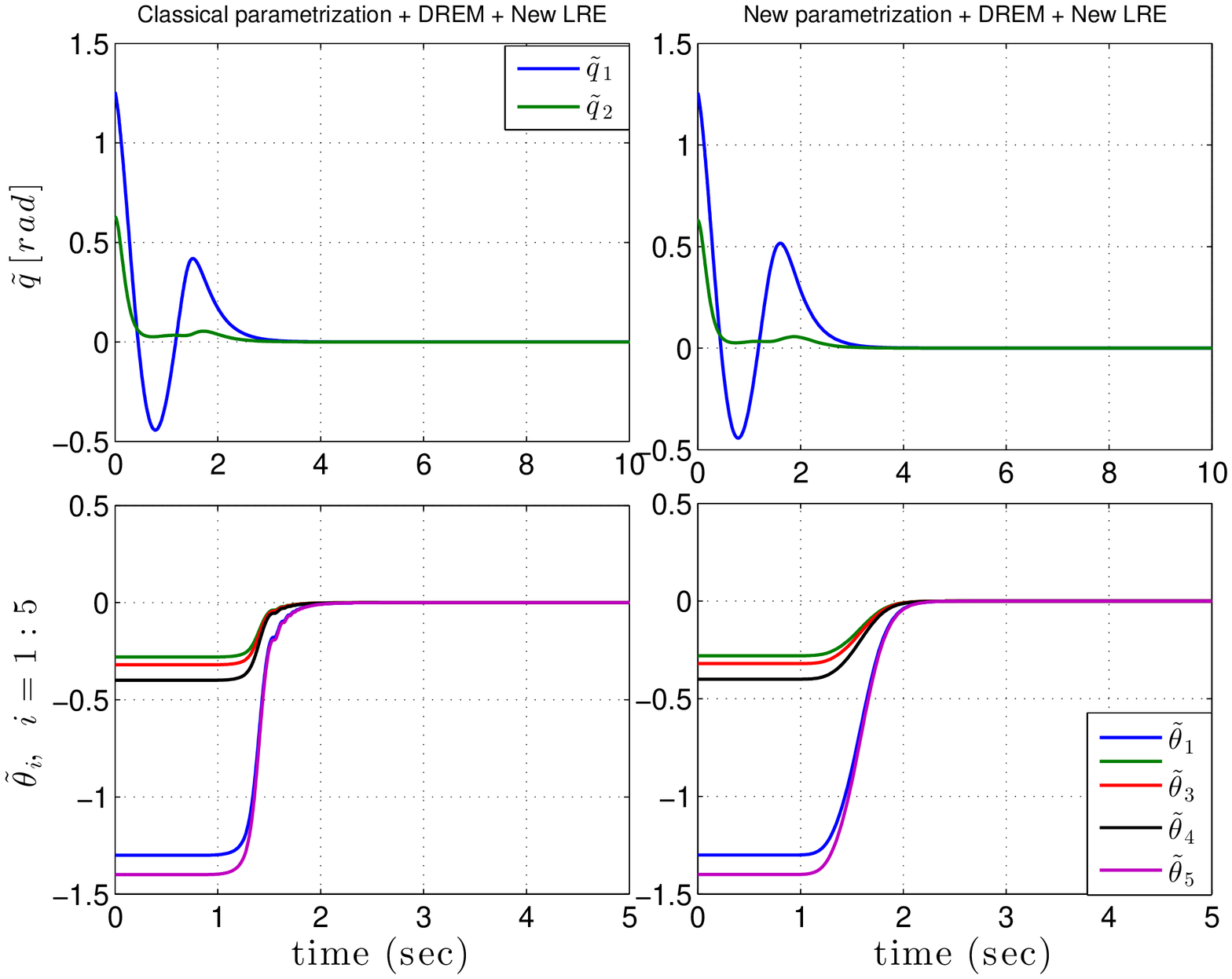}  
  \caption{Transient behavior of the signals $\tilde q_i$  and $\tilde \theta_i$}
 \label{fig15}
\end{figure}
\begin{figure}[htp]
\centering
  \includegraphics[width=0.83\textwidth]{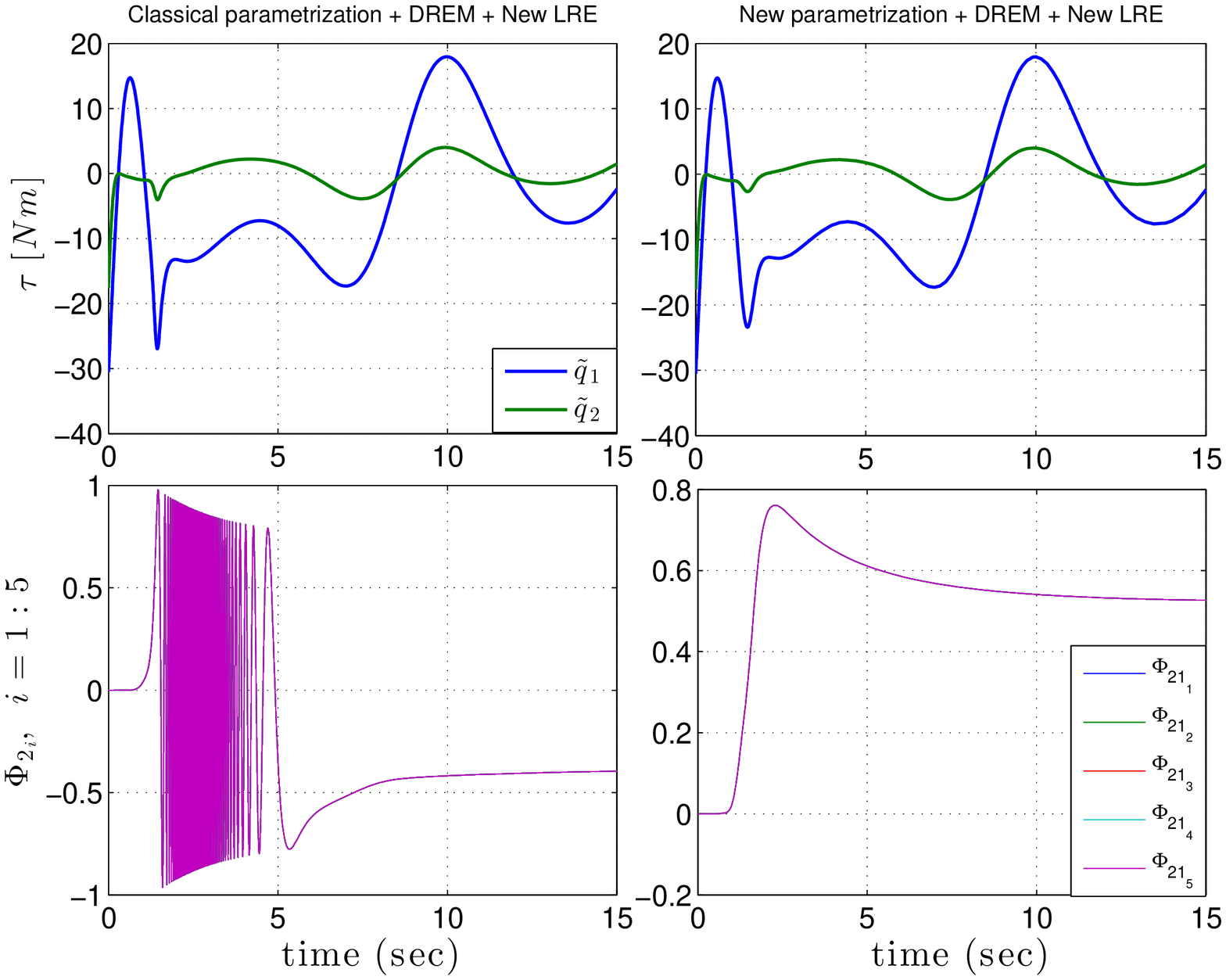}  
  \caption{Transient behavior of the  input control $\tau$   and exciting signal $\Phi_{21_i}$}
  \label{fig16}
\end{figure}

Summarizing: the simulation results confirm our claim that the new parameterization---which is of interest for its reduced computational complexity---is not applicable without DREM {\em and} the new LRE. But it becomes a feasible practical solution including these two modifications. It should be higlighted that, even though the addition of DREM and the new LRE entails additional computations, they are of much smaller magnitude than the ones required for the implementation of the classical parameterization. 

%
\section{Conclusions and Future Research}
\lab{sec5}
%
We have proven, for the first time, that the significantly simpler parameterization obtained from the power balance equation of EL systems can be used for their identification and adaptive control even in a scenario with insufficient excitation. The key  modification that is required is the use of new LREs, which are generated following the procedure proposed in \citep{BOBetal}. We believe this contribution paves the way for a wider utilization of these advanced control techniques in critical areas like robotics. 

The main result of the paper can be {\em verbatim} extended to the very broad class of {\em passive} nonlinear systems with linearly parameterized storage function. Indeed, in this case the system
\begin{align}
\dot x &= f(x,u_p)\\
y_p &=h(x,u_p),
\end{align}
with $x \in \rea^n$, $u_p \in \rea^m$ and  $y_p \in \rea^m$, verifies the relation
$$
\dot \calh = u_p^\top y_p,
$$
where the storage function $\calh:\rea^n \to \rea_+$ may be expressed as
$$
\calh(x)=\phi^\top(x)\theta + b(x),
$$  
with known functions $\phi:\rea^n \to \rea^q$ and $b:\rea^n \to \rea$, and $\theta\in \rea^q$ a vector of unknown parameters. It is clear that Proposition \ref{pro1} applies immediately to this case with the definitions
\begin{align}
y &:= H(p)[u_p^\top y_p]- pH(p)[ b(x)]\\
\Omega &:=pH(p)[\phi(x)].
\end{align}
Current research is under way to apply this result to electro-mechanical systems. 
\\
Another immediate extension is to the case of separable {\em nonlinear} parameterizations, that is the case when the regression equation has the form
$$
y(t)= \phi(t)^\top  {\cal G}(\theta),
$$
with known functions ${\cal G}: \rea^{p} \to \rea^{q}$, with $p<q$,  of the unknown parameters $\theta \in \rea^p$.  See \citep{ORTetalaut21} for additional details.

\section*{Acknowledgements}

The second author is grateful to Jean-Jacques Slotine for  insightful remarks about the parameterization of the power balance equation and to Mark Spong for bringing to his attention the work of Gautier and Khalil.
This paper is partly supported by the Ministry of Education and Science of Russian Federation (14.Z50.31. 0031, goszadanie no. 8.8885.2017/8.9), NSFC (61473183, U1509211).

\section{Appendix}
\subsection{Background Material}
\lab{appa}
In this appendix we present the following preliminary results.
\begenu
\item[{\bf B1}] Derivation of Kreisselmeier's  regressor extension \citep[Proposition 3]{ORTNIKGER} with the DREM estimator \citep[Proposition 1]{ARAetaltac}.
\item[{\bf B2}]  Generation of new LRE \citep{BOBetal} and excitation injection via energy pumping-and-damping \citep{YIetal}.
\item[{\bf B3}]  Properties of the standard gradient estimator for the new LRE.
\endenu

 \begin{proposition}
\lab{pro4}\em
Consider the LRE
$$
\boldsymbol{Y}=\boldsymbol{\Omega} \boldsymbol\theta
$$
where $\boldsymbol{Y}(t) \in \rea^n,\;\boldsymbol{\Omega}(t) \in \rea^{n \times q}$ are {\em measurable} signals and $\boldsymbol{\theta} \in \rea^q$ is a constant vector of {\em unknown} parameters. Fix $\lambda >0$ and define the signals
\begin{align}
\nonumber
\dot {\boldsymbol Z} &=-\lambda \boldsymbol Z + \boldsymbol \Omega^\top \boldsymbol Y\\
\nonumber
\dot {\boldsymbol \Psi} &=-\lambda \boldsymbol \Psi + \boldsymbol \Omega^\top  \boldsymbol \Omega\\
\nonumber
\boldsymbol \caly &= \adj\{\boldsymbol \Psi\}\boldsymbol Z\\
\lab{eq1}
\boldsymbol \Delta &=\det\{\boldsymbol \Psi\}.
\end{align}
The $q$ scalar LRE
\begequ
\lab{scanlpre}
\boldsymbol \caly_i=\boldsymbol \Delta  \boldsymbol \theta_i,\;i \in \bar q:=\{1,2,\dots,q\}.
\endequ
hold.
\end{proposition}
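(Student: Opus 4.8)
The plan is to show that the DREM dynamic extension converts the $n$-dimensional LRE $\boldsymbol Y = \boldsymbol\Omega\boldsymbol\theta$ into a square algebraic system $\boldsymbol\Psi \boldsymbol\theta = \boldsymbol Z$, and then apply Cramer's rule componentwise. The key structural fact to establish is that the filtered quantities $\boldsymbol Z$ and $\boldsymbol\Psi$ defined in \eqref{eq1} satisfy $\boldsymbol\Psi\boldsymbol\theta = \boldsymbol Z$ identically (as time signals), not just asymptotically. This is the only substantive step; everything after it is linear algebra.

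First I would substitute $\boldsymbol Y = \boldsymbol\Omega\boldsymbol\theta$ into the defining ODE for $\boldsymbol Z$, obtaining $\dot{\boldsymbol Z} = -\lambda \boldsymbol Z + (\boldsymbol\Omega^\top\boldsymbol\Omega)\boldsymbol\theta$. Since $\boldsymbol\theta$ is constant, the ODE for $\boldsymbol\Psi$ gives $\frac{d}{dt}(\boldsymbol\Psi\boldsymbol\theta) = -\lambda \boldsymbol\Psi\boldsymbol\theta + (\boldsymbol\Omega^\top\boldsymbol\Omega)\boldsymbol\theta$. Thus the signals $\boldsymbol Z$ and $\boldsymbol\Psi\boldsymbol\theta$ satisfy the same linear ODE driven by the same input $(\boldsymbol\Omega^\top\boldsymbol\Omega)\boldsymbol\theta$. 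The standard convention here (made explicit in the applications earlier in the paper, where filters are started at rest) is that the filters $\boldsymbol Z$, $\boldsymbol\Psi$ are initialized at zero; with $\boldsymbol\Psi(0)=0$ we have $\boldsymbol\Psi(0)\boldsymbol\theta = 0 = \boldsymbol Z(0)$, so uniqueness of solutions to linear ODEs forces $\boldsymbol Z(t) = \boldsymbol\Psi(t)\boldsymbol\theta$ for all $t \ge 0$. (If one does not wish to assume zero initial conditions, the identity holds up to an exponentially decaying term $e^{-\lambda t}(\boldsymbol Z(0) - \boldsymbol\Psi(0)\boldsymbol\theta)$, which is typically absorbed or ignored; I would simply state the zero-initialization convention.)

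Next I would left-multiply the algebraic identity $\boldsymbol\Psi\boldsymbol\theta = \boldsymbol Z$ by the adjugate matrix $\adj\{\boldsymbol\Psi\}$ and use the fundamental identity $\adj\{\boldsymbol\Psi\}\,\boldsymbol\Psi = \det\{\boldsymbol\Psi\}\, I_q$, valid for any square matrix regardless of invertibility. This yields $\det\{\boldsymbol\Psi\}\,\boldsymbol\theta = \adj\{\boldsymbol\Psi\}\boldsymbol Z$, i.e., $\boldsymbol\Delta\,\boldsymbol\theta = \boldsymbol\caly$ with the notation of \eqref{eq1}. Reading off the $i$-th component gives exactly the scalar LREs $\boldsymbol\caly_i = \boldsymbol\Delta\,\boldsymbol\theta_i$ for $i \in \bar q$, completing the proof.

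I do not expect a genuine obstacle here: the result is essentially Kreisselmeier's regressor extension followed by Cramer's rule, and both ingredients are classical. The only point requiring a word of care is the initialization of the extension filters, since the claimed LRE is an exact identity rather than an asymptotic one; I would address this by adopting the zero-initial-condition convention (consistent with the rest of the paper) or, equivalently, by noting the exponentially vanishing transient. The passage from the vector identity to the $q$ decoupled scalar identities is immediate once $\adj\{\boldsymbol\Psi\}\boldsymbol\Psi = \det\{\boldsymbol\Psi\}I_q$ is invoked.
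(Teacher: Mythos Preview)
Your proof is correct and follows the standard DREM argument: establish the square algebraic relation $\boldsymbol Z=\boldsymbol\Psi\boldsymbol\theta$ from the identical linear filter dynamics (with the zero-initial-condition convention or, equivalently, modulo an $e^{-\lambda t}$ transient), then left-multiply by $\adj\{\boldsymbol\Psi\}$ and invoke $\adj\{\boldsymbol\Psi\}\boldsymbol\Psi=\det\{\boldsymbol\Psi\}I_q$ to decouple into $q$ scalar LREs. The paper itself does not supply a proof of this proposition; it merely states the result and cites \citep[Proposition~3]{ORTNIKGER} and \citep[Proposition~1]{ARAetaltac}, where precisely the argument you outline is given. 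Your handling of the initialization issue is also appropriate and consistent with the paper's practice elsewhere of starting the filters at rest.
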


\begin{proposition} \em
\lab{pro5}
Consider the scalar LREs\footnote{To simplify the notation we omit the subindex $i$ in the proposition.}  \eqref{scanlpre} with $\boldsymbol \Delta$ interval exciting,  \citep{KRERIE}, that is such that 
$$
\int_{0}^{t_{c}}\boldsymbol \Delta^2(\tau) d\tau \geq \delta.
$$
for some $t_{c}>0$ and $\delta>0$.

Define the dynamic extension
\begin{align}
\nonumber
	\dot {\boldsymbol z} & =\boldsymbol  u_{2}\boldsymbol \caly+\boldsymbol  u_{3}\boldsymbol  z,\;\boldsymbol z(0)=0 \\
\nonumber
	 \dot{ \boldsymbol \xi} & =\boldsymbol  A(t)\boldsymbol  \xi +\boldsymbol  b(t),\;\boldsymbol  \xi(0)=\col(0,0) \\
\lab{eq2}
\dot {\boldsymbol   \Phi}& =\boldsymbol  A(t)\boldsymbol  \Phi,\; \boldsymbol \Phi(0)=I_2,
\end{align}
where
$$
\boldsymbol A(t) :=\left[ \begin{array}{cc}0 & \boldsymbol u_{1}(t) \\\boldsymbol  u_{2}(t) \boldsymbol \Delta(t) &\boldsymbol  u_{3}(t)\end{array} \right],\; \boldsymbol b(t):=\left[\begin{array}{c}-\boldsymbol u_{1}(t) \boldsymbol z(t)\\ 0 \end{array} \right],
$$
with $\boldsymbol u_{1}(t),\boldsymbol u_{2}(t),\boldsymbol u_{3}(t) \in \rea$ {\em arbitrary} signals.
\begenu
\item[{\bf P1}] The new LRE
\begin{equation}
\label{newlre}
Y  = { \boldsymbol  \Phi}_{21} \boldsymbol \theta, 
\end{equation}
holds with
$$
{Y}:=\boldsymbol z- \boldsymbol \xi_{2}
$$
and ${\boldsymbol   \Phi}_{21}(t) \in \rea$ the $(\cdot)_{21}$ element of the matrix $\boldsymbol \Phi$.

\item[{\bf P2}]  Define the signals
\begin{align}
\nonumber
\boldsymbol u_{1} & =- \boldsymbol \alpha \boldsymbol \Delta\\
\nonumber
\boldsymbol u_{2} & =\boldsymbol \alpha\\
\lab{eq3}
\boldsymbol u_{3} &= -\tilde V(\boldsymbol  \Phi_{11},\boldsymbol \Phi_{21}),
\end{align}
where  $0<\boldsymbol \beta< \hal$, $\boldsymbol \alpha(t) \in \rea$ is a bounded signal such that $\boldsymbol \alpha(t) \boldsymbol \Delta(t) \not \equiv 0$,
\begequ
\lab{conalpdel}
\boldsymbol \alpha(t) \boldsymbol \Delta(t) \in \call_1,
\endequ 
and
$$
\tilde V(\boldsymbol  \Phi_{11},\boldsymbol \Phi_{21})  := \hal (\boldsymbol \Phi_{11}^2 +\boldsymbol  \Phi_{21}^2) -\boldsymbol  \beta.
$$
The full state of the LRE generator---that is, $\boldsymbol \Phi,\boldsymbol \xi$ and $\boldsymbol z$---is bounded and either $\boldsymbol \Phi_{21}\not \in \call_2$ or 
\begequ
\lab{bouphi}
\boldsymbol  \Phi^2_{11}(t)+\boldsymbol \Phi^2_{21}(t) \geq 2 \boldsymbol  \beta+\boldsymbol \varepsilon, \quad \forall t \ge 0,
\endequ
for some (sufficiently small) $\boldsymbol \varepsilon>0$.
\endenu
\end{proposition}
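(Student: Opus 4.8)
The statement has two parts, which I would establish in turn.

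\textbf{Part P1.} The plan is to recognise $\boldsymbol\xi$ as one solution of an affine time-varying system and to write down a second, fully explicit, solution. Set $\boldsymbol\eta:=\col(\boldsymbol\theta,\boldsymbol z)$. Using $\dot{\boldsymbol\theta}=0$, the scalar LRE \eqref{scanlpre} (that is, $\boldsymbol\caly=\boldsymbol\Delta\boldsymbol\theta$), and the definitions of $\boldsymbol A(t),\boldsymbol b(t)$, a one-line substitution shows that $\boldsymbol\eta$ satisfies the \emph{same} equation $\dot{(\cdot)}=\boldsymbol A(t)(\cdot)+\boldsymbol b(t)$ as $\boldsymbol\xi$ in \eqref{eq2}, but with initial condition $\boldsymbol\eta(0)=\col(\boldsymbol\theta,0)$. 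Hence $\boldsymbol\eta-\boldsymbol\xi$ obeys the homogeneous system $\dot{(\cdot)}=\boldsymbol A(t)(\cdot)$ from the initial value $\col(\boldsymbol\theta,0)$, and since $\boldsymbol\Phi$ in \eqref{eq2} is exactly the state-transition matrix of $\boldsymbol A(t)$, $\boldsymbol\eta(t)-\boldsymbol\xi(t)=\boldsymbol\Phi(t)\col(\boldsymbol\theta,0)$. The second row of this identity is $\boldsymbol z-\boldsymbol\xi_2=\boldsymbol\Phi_{21}\boldsymbol\theta$, which is \eqref{newlre} with $Y:=\boldsymbol z-\boldsymbol\xi_2$; the first row gives the by-product $\boldsymbol\xi_1=(1-\boldsymbol\Phi_{11})\boldsymbol\theta$, which I will reuse below.

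\textbf{Part P2.} The heart of the argument is an energy balance for $\tilde V$ along the first column of $\dot{\boldsymbol\Phi}=\boldsymbol A(t)\boldsymbol\Phi$. Differentiating $\tilde V=\hal(\boldsymbol\Phi_{11}^2+\boldsymbol\Phi_{21}^2)-\boldsymbol\beta$ and inserting \eqref{eq3}, the sign-indefinite cross term disappears because $\boldsymbol u_1+\boldsymbol u_2\boldsymbol\Delta=-\boldsymbol\alpha\boldsymbol\Delta+\boldsymbol\alpha\boldsymbol\Delta=0$, and with $\boldsymbol u_3=-\tilde V$ one is left with the scalar linear ODE $\dot{\tilde V}=-\tilde V\,\boldsymbol\Phi_{21}^2$. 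Since $\boldsymbol\Phi(0)=I_2$ gives $\tilde V(0)=\hal-\boldsymbol\beta>0$, integration yields $\tilde V(t)=(\hal-\boldsymbol\beta)\exp\!\big(-\int_0^t\boldsymbol\Phi_{21}^2(\tau)\,d\tau\big)$; thus $0<\tilde V(t)\le\hal-\boldsymbol\beta$, equivalently $2\boldsymbol\beta<\boldsymbol\Phi_{11}^2(t)+\boldsymbol\Phi_{21}^2(t)\le1$ for all $t$. This bounds $(\boldsymbol\Phi_{11},\boldsymbol\Phi_{21})$ and, crucially, gives $\boldsymbol u_3=-\tilde V<0$.

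Boundedness of the remaining states follows in the same spirit. The second column of $\boldsymbol\Phi$ satisfies $\frac{d}{dt}\big(\hal(\boldsymbol\Phi_{12}^2+\boldsymbol\Phi_{22}^2)\big)=-\tilde V\,\boldsymbol\Phi_{22}^2\le0$, so $\hal(\boldsymbol\Phi_{12}^2+\boldsymbol\Phi_{22}^2)\le\hal$ and $\boldsymbol\Phi$ is bounded. Substituting $\boldsymbol\caly=\boldsymbol\Delta\boldsymbol\theta$ and \eqref{eq3} into the $\boldsymbol z$-equation of \eqref{eq2} gives $\dot{\boldsymbol z}=-\tilde V\,\boldsymbol z+\boldsymbol\alpha\boldsymbol\Delta\,\boldsymbol\theta$, a scalar linear equation with nonnegative damping $\tilde V\ge0$ and, by \eqref{conalpdel}, an $\call_1$ forcing term; variation of constants with $\boldsymbol z(0)=0$ then gives $|\boldsymbol z(t)|\le\|\boldsymbol\alpha\boldsymbol\Delta\,\boldsymbol\theta\|_{\call_1}<\infty$, and boundedness of $\boldsymbol\xi$ is immediate from the P1 identities $\boldsymbol\xi_1=(1-\boldsymbol\Phi_{11})\boldsymbol\theta$, $\boldsymbol\xi_2=\boldsymbol z-\boldsymbol\Phi_{21}\boldsymbol\theta$. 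Finally the dichotomy: if $\boldsymbol\Phi_{21}\notin\call_2$ the first alternative holds; otherwise $c:=\int_0^\infty\boldsymbol\Phi_{21}^2(\tau)\,d\tau<\infty$ and the closed form above gives $\tilde V(t)\ge(\hal-\boldsymbol\beta)e^{-c}>0$ for all $t$, i.e.\ \eqref{bouphi} with $\boldsymbol\varepsilon:=(1-2\boldsymbol\beta)e^{-c}$.

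The step I expect to be the main obstacle — and the one that dictates the design of the extension — is guaranteeing that $\tilde V$ stays \emph{strictly positive for all} $t$: the sign $\boldsymbol u_3=-\tilde V<0$ is exactly what makes the damping in the $(\boldsymbol\Phi_{12},\boldsymbol\Phi_{22})$ and $\boldsymbol z$ equations act in the right direction and what powers the final dichotomy, so a purely qualitative sign/barrier argument would be too fragile; integrating $\dot{\tilde V}=-\tilde V\boldsymbol\Phi_{21}^2$ in closed form — possible only because of the pumping-and-damping cancellation $\boldsymbol u_1+\boldsymbol u_2\boldsymbol\Delta=0$ — is what makes it airtight. I would also note that the standing hypothesis $\boldsymbol\alpha\boldsymbol\Delta\not\equiv0$ is not actually used in this proposition; it matters only downstream, to rule out $\boldsymbol\Phi_{21}\equiv0$ and render the second alternative informative.
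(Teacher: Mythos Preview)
Your proof is correct. Note, however, that the paper does not actually prove Proposition~\ref{pro5}: it is presented in the Appendix as background material imported from \citep{BOBetal} and \citep{YIetal}, so there is no in-paper proof to compare against directly. That said, your argument matches precisely the methodology the paper attributes to those references --- the PEBO idea for {\bf P1} (embedding $\col(\boldsymbol\theta,\boldsymbol z)$ as a particular solution of the $\boldsymbol\xi$-equation and reading off the regression from the state-transition matrix) and the energy pumping-and-damping mechanism for {\bf P2} (the cancellation $\boldsymbol u_1+\boldsymbol u_2\boldsymbol\Delta=0$ followed by the closed-form integration $\tilde V(t)=(\tfrac12-\boldsymbol\beta)\exp\bigl(-\int_0^t\boldsymbol\Phi_{21}^2\bigr)$). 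Your side observations are also apt: neither the interval-excitation hypothesis on $\boldsymbol\Delta$ nor the condition $\boldsymbol\alpha\boldsymbol\Delta\not\equiv0$ is invoked in the proof of {\bf P1}--{\bf P2} as stated; both are standing assumptions whose role is downstream (the latter to ensure $\boldsymbol\Phi_{21}\not\equiv0$, so that the new LRE is nontrivial and Proposition~\ref{pro6} has content).
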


 \begin{proposition}
\lab{pro6}\em
Consider the scalar LRE  \eqref{newlre} of  Proposition \ref{pro5} with the gradient estimator
\begequ
\label{eq4}
\dot{\hat{\boldsymbol \theta}} = \boldsymbol \gamma  \frac{ { \boldsymbol  \Phi}_{21}}{1+ { \boldsymbol  \Phi}_{21}^2} \left( Y -  { \boldsymbol  \Phi}_{21}\, \hat {\boldsymbol \theta}\right),
\endequ
with $\boldsymbol \gamma>0$.  The following implication is true:
$$
\liminf \boldsymbol \Phi_{11}(t) \neq \sqrt{2\beta} \; \Rightarrow \;\liminf \hat{\boldsymbol \theta}(t) =\boldsymbol \theta.
$$
\end{proposition}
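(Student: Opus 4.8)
The plan is to reduce the assertion to a square-integrability question about $\Phi_{21}$ and then let the dichotomy in point~{\bf P2} of Proposition~\ref{pro5} do the work. First I would introduce the estimation error $\tilde\theta:=\hat\theta-\theta$. Substituting the scalar LRE \eqref{newlre} into the estimator \eqref{eq4} gives $Y-\Phi_{21}\hat\theta=-\Phi_{21}\tilde\theta$, so the error obeys the scalar linear time-varying equation
$$
\dot{\tilde\theta}=-\gamma\,\frac{\Phi_{21}^2}{1+\Phi_{21}^2}\,\tilde\theta ,
$$
whence $\tilde\theta(t)=\tilde\theta(0)\exp\!\big(-\gamma\!\int_0^t \tfrac{\Phi_{21}^2(s)}{1+\Phi_{21}^2(s)}\,ds\big)$. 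In particular $|\tilde\theta|$ is nonincreasing (so $\hat\theta$ is bounded regardless), and $\hat\theta(t)\to\theta$ holds precisely when $\tilde\theta(0)=0$ or $\int_0^\infty \tfrac{\Phi_{21}^2(s)}{1+\Phi_{21}^2(s)}\,ds=+\infty$. Discarding the trivial alternative, the whole proof comes down to showing that $\lim_{t\to\infty}\Phi_{11}(t)\neq\sqrt{2\beta}$ forces this integral to diverge.

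Next I would invoke point~{\bf P2} of Proposition~\ref{pro5}: the generator state is bounded, so $|\Phi_{21}(t)|\le M$ for some $M$; and either (i)~$\Phi_{21}\notin\call_2$, or (ii)~the uniform bound \eqref{bouphi} holds. In case (i) the elementary inequality $\tfrac{\Phi_{21}^2}{1+\Phi_{21}^2}\ge\tfrac{1}{1+M^2}\Phi_{21}^2$ already gives $\int_0^\infty\tfrac{\Phi_{21}^2}{1+\Phi_{21}^2}\,ds=+\infty$, and the conclusion follows without even using the hypothesis. Hence the entire content of the proposition is the statement that case (ii) is incompatible with $\lim_{t\to\infty}\Phi_{11}(t)\neq\sqrt{2\beta}$, which is what remains.

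To attack case (ii) I would work with the energy-like function $W:=\tfrac12(\Phi_{11}^2+\Phi_{21}^2)=\tilde V+\beta$. From \eqref{eq2}--\eqref{eq3} one has $\dot\Phi_{11}=-\alpha\Delta\,\Phi_{21}$ and $\dot\Phi_{21}=\alpha\Delta\,\Phi_{11}-\tilde V\,\Phi_{21}$, so the cross-terms cancel and $\dot W=-\tilde V\,\Phi_{21}^2=-(W-\beta)\Phi_{21}^2$. Under \eqref{bouphi} we have $W\ge\beta+\varepsilon/2$, so $W$ decreases to a limit $W_\infty$, and $\int_0^\infty(W-\beta)\Phi_{21}^2\,dt=W(0)-W_\infty<\infty$ together with $W-\beta\ge\varepsilon/2$ forces $\Phi_{21}\in\call_2$; moreover, condition \eqref{conalpdel} together with boundedness of $\Phi_{21}$ gives $\dot\Phi_{11}\in\call_1$, hence $\Phi_{11}(t)$ converges, and passing to the limit in $\Phi_{21}^2=2W-\Phi_{11}^2$ yields $\Phi_{21}\to0$ and $(\lim_{t\to\infty}\Phi_{11})^2=2W_\infty$. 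The crux --- and the step that genuinely requires the machinery of \citep{BOBetal} (the energy pumping-and-damping principle of \citep{YIetal} underlying {\bf P2}), rather than the elementary manipulations above --- is to combine these relations with the remaining generator equations for $z$ and $\xi$ (whose boundedness is also asserted in {\bf P2}, while $\det\Phi(t)=\exp\!\big(-\int_0^t(W-\beta)\,ds\big)$ decays exponentially in case (ii)) and conclude that, under the hypothesis $\lim_{t\to\infty}\Phi_{11}(t)\neq\sqrt{2\beta}$, case (ii) simply cannot occur. Granting that, we are always in case (i) and $\hat\theta(t)\to\theta$, as claimed. I expect this last reconciliation to be the main obstacle; everything preceding it is routine.
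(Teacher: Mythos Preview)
Your reduction to the scalar error ODE and the subsequent case split via the dichotomy in {\bf P2} of Proposition~\ref{pro5} is exactly the paper's strategy; the paper simply cites \citep[Proposition~1]{ARAetaltac} for your case~(i) rather than writing out the integral comparison $\tfrac{\Phi_{21}^2}{1+\Phi_{21}^2}\ge\tfrac{1}{1+M^2}\Phi_{21}^2$.

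Where you diverge is in your treatment of case~(ii). The paper's argument there is a single sentence: in view of \eqref{bouphi}, the hypothesis $\lim_{t\to\infty}\Phi_{11}(t)\neq\sqrt{2\beta}$ forces $\Phi_{21}$ to be persistently exciting, and PE yields exponential convergence. It never invokes $z$, $\xi$, or $\det\Phi$; those signals do not feed back into the $(\Phi_{11},\Phi_{21})$ subsystem, so your proposed detour through them cannot help and should be dropped. Your intermediate computations in case~(ii) --- monotonicity of $W$, convergence of $\Phi_{11}$ via $\dot\Phi_{11}\in\call_1$, and $\Phi_{21}\to0$ --- are correct and in fact go beyond what the paper supplies, but notice that they give $|\lim_{t\to\infty}\Phi_{11}|=\sqrt{2W_\infty}>\sqrt{2\beta}$, not $=\sqrt{2\beta}$; hence the route ``case~(ii) contradicts the hypothesis'' does not close the way you hoped. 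The paper sidesteps this by asserting PE directly and leaning on \citep{BOBetal} for the underlying justification, which is in the end exactly what you do as well --- so your identification of this step as the genuine crux deferred to \citep{BOBetal} is accurate.
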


\begin{proof}
The proof follows immediately noting that, on one hand, $\boldsymbol \Phi_{21}\not \in \call_2$ ensures parameter convergence \citep[Proposition 1]{ARAetaltac}. On the other hand, in the light of \eqref{bouphi}, the condition $\liminf \boldsymbol \Phi_{11}(t) \neq \sqrt{2\beta}$ implies that $\boldsymbol  \Phi_{21}$ is PE, yielding exponential convergence.
\end{proof}


\begin{thebibliography}{99}

\bibitem[Aranovskiy et al.(2017)]{ARAetaltac}
Aranovskiy S, Bobtsov A, Ortega R and Pyrkin A (2017) Performance enhancement of parameter estimators via dynamic regressor extension and mixing.  \TAC  \,62: 3546-3550. (See also {\tt arXiv:1509.02763} for an extended version.)

\bibitem[Block(1991)]{BLOCK}
Block DJ (1991) {\em Mechanical Design and Control of the Pendubot}. MSc Thesis University of Illinois.

\bibitem[Bobtsov et al.(2021)]{BOBetal}
Bobtsov A, Yi B,  Ortega R and Astolfi A (2021) Generation of new exciting regressors for consistent on-line estimation of a scalar parameter. \TAC \, (submitted). (arXiv preprint:  {\tt arXiv:2104.02210}.)

\bibitem[Craig(2009)]{Craig} 
Craig JJ  {\em Introduction to Robotics: Mechanics and Control}.  Pearson Education 3rd ed: Chennai, India.


\bibitem[Gautier and Khalil(1990)]{GAUKHATRA}
Gautier M and Khalil W (1990) Direct calculation of minimum set of inertial parameters of serial robots. {\em IEEE Transactions
on Robotics and Automation} 6(3): 368--373.

\bibitem[Gautier and Khalil(1992)]{GAUKHAIJRR}
Gautier M and Khalil W (1992) Exciting trajectories for the identification of base inertial parameters of robots. {\it The International Journal of Robotics Research}  11(4): 362--375.

\bibitem[Gautier and Khalil(1988)]{GAUKHACDC}
Gautier M and Khalil W (1998) On the identification of the inertial parameters of robots. {\it Proc. 27th IEEE Conference Decision and Control}, pp. 2264--2269.



 \bibitem[Huang and Chien(2010)]{HUACHI}
Huang A and Chien, M (2010) {\em Adaptive Control of Robot Manipulators a Unified Regressor-free Approach}.  Hackensack, N.J, World Scientific Pub: Singapore.
%
\bibitem[Khalil and Dombre(2002)]{KHADOM}
 Khalil W and Dombre E (2002) {\em Modeling, Identification \& Control of Robots}. Hermes Penton: London.

\bibitem[Khosla and Kanade(1985)]{KHOKAN}
Khosla PK and Kanade T (1985)  Parameter identification of robot dynamics. {\em  24th IEEE Conference on Decision and Control}, pp. 1754--1760.

\bibitem[Kreisselmeier(1977)]{KRE}
Kreisselmeier G (1997) Adaptive observers with exponential rate of convergence.  \TAC \, 22(1): 2--8. 

\bibitem[Kreisselmeier and Rietze(1990)]{KRERIE}
Kreisselmeier G and Rietze GA (1990) Richness and excitation on an interval---with application to continuous-time adaptive control. \TAC \, 35(2): 165--171.

\bibitem[Ljung(1987)]{LJUbook} 
Ljung L (1987) {\em System Identification: Theory for the User}. Prentice Hall:  New Jersey.

\bibitem[Niemeyer and Slotine(1991)]{NIESLO}
Niemeyer G and  Slotine JJE (1991) Performance in adaptive manipulator control.{\em  The International Journal of Robotics and Research} 10(2): 149-)161.

\bibitem[Ortega(1996)]{ORT} 
Ortega R (1996) Some remarks on adaptive neuro-fuzzy systems. {\em Int. J. Adapt. Cont. and Signal Proc.}  10(1): 79--83.

\bibitem[Ortega et al.(1998)]{ORTbook} 
Ortega R, Loria A, Nicklasson PJ and Sira-Ramirez H (1998) {\em Passivity--Based Control of  Euler--Lagrange Systems}. Springer-Verlag Communications and Control Engineering: Berlin.

\bibitem[Ortega et al.(2015)]{ORTetalscl}
 Ortega R, Bobtsov A, Pyrkin A and  Aranovskyi A (2015) A parameter estimation approach to  state observation of nonlinear systems. {\it Systems and Control Letters} 85: 84--94.


\bibitem[Ortega et al.(2020)]{ORTNIKGER} 
Ortega R, Nikiforov V and  Gerasimov D (2020) On modified parameter estimators for identification and adaptive control: a unified framework and some new schemes. \ARC \, 50: 278--293.


\bibitem[Ortega et al.(2021a)]{ORTetalaut}
 Ortega R,  Bobtsov A, Nikolaev N,  Schiffer J  and Dochain D (2021a) Generalized parameter estimation-based observers: Application to power systems and chemical-biological reactors,  \AUT \,129, 109635.


\bibitem[Ortega et al.(2021b)]{ORTetaltac} 
Ortega R, Aranovskiy S, Pyrkin A, AstolfiA  and Bobtsov A (2021b)   New results on parameter estimation via dynamic regressor extension and mixing: Continuous and discrete-time cases. \TAC\,  66(5): 2265--2272.

\bibitem[Ortega et al.(2021c)]{ORTetalaut21} 
 Ortega R, Gromov V, Nu\~no E, Pyrkin A and Romero JG (2021c) Parameter estimation of nonlinearly parameterized regressions: application to system identification and adaptive control.  \AUT \,  (127) 109544.


\bibitem[Prufer et al.(1994)]{PRUFERetal}
Prufer M, Schmidt C and Wahl F (1994)  Identification of robot dynamics with differential and integral models: A comparison. {\it IEEE International Conference on Robotics and Automation},  pp. 340-345.

\bibitem[Sastry and M. Bodson(1989)]{SASBODbook}
Sastry S and Bodson M (1989) {\em Adaptive Control: Stability, Convergence and Robustness}. Prentice-Hall: New Jersey.

\bibitem[Slotine and Li(1988)]{SLOLItac}
Slotine JJE and  Li W (1988)  Adaptive manipulator control: a case study. {\it IEEE Transactions on Automatic Control}  33(11): 995--1003.

\bibitem[Slotine and Li(1989)]{SLOLIaut}
Slotine JJE and Li W (1998) Composite adaptive control of robot manipulators. {\em Automatica} 25(4): 509--519.

\bibitem[Sousa and Cortesao(2014)]{SOUCOR}
Sousa C and Cortesao R (2014) Physical feasibility of robot base inertial parameter identification: A linear matrix inequality approach. {\em Int. J. of Robotics Research} 33(6): 931--944.

\bibitem[Spong et al.(2020)]{SPOHUTVID}
Spong MW,  Hutchinson S and Vidyasagar M (2020) {\em Robot Modeling and Control} \, Wiley.

\bibitem[Yi et al.(2020)]{YIetal}
Yi B, Ortega R,  Wu D and  Zhang W (2020) Orbital stabilization of nonlinear systems via Mexican sombrero energy pumping-and-damping injection. {\it Automatica}  112  108861

\bibitem[Zhang and We(2021)]{ZHAWE}
Zhang D and We B, (2021) {\em Adaptive Control for Robotic Manipulators}.  CRC Press Taylor and Francis.
%
\end{thebibliography}

\end{document}